\DeclareSymbolFont{cyrletters}{OT2}{wncyr}{m}{n}
\numberwithin{equation}{section} \numberwithin{figure}{section}
 \DeclareMathOperator{\rank}{rank}
\DeclareMathOperator{\HH}{H}
\DeclareMathOperator{\chr}{char}
\DeclareSymbolFont{cyrletters}{OT2}{wncyr}{m}{n}
\DeclareMathSymbol{\Sha}{\mathalpha}{cyrletters}{"58}
\DeclareMathSymbol{\Be}{\mathalpha}{cyrletters}{"42}
\newcommand{\OO}{\mathcal{O}}
\newcommand\PP{\mathbb{P}}
\newcommand\QQ{\mathbb{Q}}
\newcommand\GG{\mathbb{G}}
\newcommand\Ga{\GG_\mathrm{a}}
\newtheorem{lemma}{Lemma}
\newtheorem{theorem}[lemma]{Theorem}
\newtheorem{proposition}[lemma]{Proposition}
\newtheorem{corollary}[lemma]{Corollary}
\theoremstyle{definition}
\newtheorem{example}[lemma]{Example}
\newtheorem{definition}[lemma]{Definition}
\newtheorem{remark}[lemma]{Remark}
\numberwithin{lemma}{section}
\begin{document}

\title{Rank jumps on elliptic surfaces and the Hilbert property}

\author{Daniel Loughran}
\address{
Department of Mathematical Sciences\\
University of Bath\\
Claverton Down\\
Bath\\
BA2 7AY\\
UK}
\urladdr{https://sites.google.com/site/danielloughran/}

\author{Cec\'{\i}lia Salgado}
\address{Instituto de Matem\'atica, Univ. Federal do Rio de Janeiro, Rio de Janeiro, Brazil}
\email{salgado@im.ufrj.br}
\urladdr{http://www.im.ufrj.br/\~{}salgado}
\subjclass[2010]
{14G05 (primary), 
14J27, 
11G05 
(secondary)}

\begin{abstract}
Given an elliptic surface over a number field, we study the collection of fibres whose Mordell--Weil rank is greater than the generic rank. Under suitable assumptions, we show that this collection is not thin. Our results apply to quadratic twist families  and del Pezzo surfaces of degree $1$.
\end{abstract}

\maketitle

\thispagestyle{empty}

\tableofcontents

\section{Introduction} \label{sec:intro}

Let $E$ be an elliptic curve over $\QQ$. By the classical Mordell--Weil theorem, the group of rational points $E(\QQ)$ of $E$ is a finitely generated abelian group. The rank of this group is an important invariant and it is an open problem whether the rank is uniformly bounded for all elliptic curves over $\QQ$.

A common method for producing elliptic curves of large rank is as follows: one takes an elliptic curve $E_t$ over the \emph{function field} $\QQ(t)$ with large rank (the generic rank), then all but finitely many specialisations $t \in \QQ$ will have rank at least as large of the generic rank. We push this method further in two ways: we can make the rank of a specialisation \emph{strictly larger} than the generic rank, and we can moreover achieve this for ``many'' choices of $t$ (namely for a so-called \emph{non-thin} set of $t$).  We now explain our results and setup in more detail.

\subsection{Statement of results}
Let $\pi:X \to \PP^1$ be an elliptic surface defined over a number field $k$, i.e.~a smooth projective surface endowed with a genus one fibration that admits a section. The presence of a section implies that all but finitely many fibres are elliptic curves. Moreover, a theorem of N\'eron \cite{Ne52} implies that the Mordell--Weil group of the generic fibre is finitely generated. 

Silverman's specialization theorem \cite[Thm.~C]{Sil} states that all but finitely many fibres have rank at least the generic rank. This result built on a theorem of N\'eron \cite[Thm.~6]{Ne52} (see also \cite[\S11.1]{Ser97})  over higher-dimensional  bases which says that outside of a \emph{thin set} of rational points, the rank is at least the generic rank. (We recall the definition of thin sets in \S\ref{sec:pre}.)
Given Silverman's result, a  natural question is whether one can construct infinitely many fibres whose rank is \emph{greater than} the generic rank. This has been achieved in various cases when $X$ is unirational or a K$3$ surface \cite{Billard, salgado1,salgado3,HindrySal}. 

In our paper we take N\'eron's theorem as our starting point, and prove results towards showing that the collection of fibres where the rank jumps is not a thin set. Our first result concerns elliptic surfaces with a bisection, i.e.~a geometrically integral curve $C \subset X$ such that the induced map to $\PP^1$ is finite of degree $2$.

\begin{theorem} \label{thm:conic}
	Let $\pi: X \to \PP^1$ be a geometrically rational elliptic surface over a number field $k$ with generic rank $r$. Assume that $\pi$ admits a bisection of arithmetic genus $0$ and that the generic fibre of $\pi$ is not a quadratic twist of a constant elliptic curve. Then the set
	$$\{ t \in \PP^1(k) : \rank X_t(k) \geq r + 2\}$$
	is not thin.
\end{theorem}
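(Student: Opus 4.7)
The natural approach is a base-change argument along the bisection. Set $\pi_C : X_C \to C$ to be the elliptic fibration obtained as the pullback of $\pi$ along $\pi|_C : C \to \PP^1$; that is, $X_C = X \times_{\PP^1} C$ with $\pi_C$ the second projection. The inclusion $C \hookrightarrow X$ induces an embedding $C \times_{\PP^1} C \hookrightarrow X_C$, and since $\pi|_C$ has degree $2$ this scheme splits (away from the branch locus) as the diagonal $\Delta$ together with the graph $\Gamma_\sigma$ of the hyperelliptic involution $\sigma$ of $\pi|_C$. Each component maps isomorphically to $C$, producing two distinguished sections $s_1, s_2 : C \to X_C$ of $\pi_C$.

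The first task is to show that the Mordell--Weil rank of the generic fibre of $\pi_C$ is at least $r+1$. Pullback along $\pi|_C$ sends $r$ independent sections of $\pi$ to $r$ independent sections of $\pi_C$, and the additional class comes from $s_1 - s_2$. This class is torsion precisely when $X$ becomes isomorphic to a constant family after pullback by $\pi|_C$, which happens exactly when $\pi$ is the quadratic twist of a constant elliptic curve by the double cover $\pi|_C$. The hypothesis of the theorem is designed to rule out this degenerate case.

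The second task is to apply a rank-jump argument to the elliptic surface $\pi_C : X_C \to C$, whose generic rank is now at least $r+1$: the goal is to produce a non-thin subset $T \subseteq C(k)$ on which $\rank (X_C)_c(k) \geq r + 2$. When $C(k) \neq \emptyset$ one has $C \cong \PP^1$ over $k$ and the situation reduces to rank jumps on an elliptic fibration over $\PP^1$; the anisotropic-conic case $C(k) = \emptyset$ requires a separate treatment (a descent or an auxiliary cover). Once $T$ is at hand, the image $\pi|_C(T) \subseteq \PP^1(k)$ is non-thin because $\pi|_C$ is a dominant morphism between irreducible varieties, and the identification $(X_C)_c \cong X_{\pi(c)}$ of elliptic curves over $k$ places this image inside the desired rank-jump locus.

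The main obstacle I anticipate is the second task: base change by the degree $2$ morphism $\pi|_C$ doubles the Euler characteristic of the fibration, so $X_C$ is typically a K3 surface (or of higher Kodaira dimension) rather than a rational elliptic surface. One therefore needs a rank-jump technique robust enough to apply beyond the rational case, together with a careful non-thinness bookkeeping. The quadratic-twist hypothesis is precisely what ensures the base change genuinely increases the rank, while the arithmetic genus-$0$ assumption on $C$ is what allows non-thinness to be propagated back to $\PP^1(k)$.
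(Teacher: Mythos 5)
Your final step fails at exactly the point the whole difficulty of the theorem lies. You claim that once you have a non-thin set $T \subseteq C(k)$ of fibres of $\pi_C$ with rank $\geq r+2$, the image $\pi|_C(T) \subseteq \PP^1(k)$ is non-thin ``because $\pi|_C$ is a dominant morphism between irreducible varieties''. This is false: $\pi|_C : C \to \PP^1$ is a generically finite dominant morphism of degree $2$, so the image of \emph{all} of $C(k)$ in $\PP^1(k)$ is a thin set by the very definition of thinness (a type II thin set). Non-thinness never propagates forward through a degree $\geq 2$ cover. So even a complete solution of your ``second task'' on $X_C \to C$ would only produce a thin set of $t \in \PP^1(k)$ with rank jump, which is the classical situation (infinitely many rank jumps along a fixed multisection) and not the content of the theorem. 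A secondary inaccuracy: you have misread the role of the quadratic-twist hypothesis. It is not needed to make the rank go up under base change (the paper proves rank jump $+1$ on a non-thin set even for quadratic twist families, in Theorem \ref{thm:once} and Theorem \ref{thm:non_reduced}.(1)); it is needed because for twist families the natural (Ch\^atelet) conic bundle has all its conics sharing the same two branch points over $\PP^1$, which breaks the biquadratic construction used for rank jump $+2$.

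The missing idea is that the bisection must be allowed to \emph{move}. Since $p_a(C)=0$ and $-K_X\cdot C=2$ (the fibration being anticanonical), the linear system $|C|$ is a base-point-free pencil inducing a conic bundle $\psi : X \to \PP^1$, so one has an infinite family of bisections $D$, and correspondingly an infinite family of quadratic extensions $k(D)/k(\PP^1)$ whose branch points vary (this is where reducedness of the fibres of $\pi$ enters, and why twist families need separate care). Given any finite collection of covers $Y_i \to \PP^1$ witnessing a putative thin set, one chooses $D_1, D_2$ in the pencil with rational points, with $k(D_1)\otimes_{k(\PP^1)}k(D_2)$ linearly disjoint from every $k(Y_i)$, and such that the double base change $X\times_\pi (D_1\times_\pi D_2)$ has generic rank $\geq r+2$; rational points of $D_1\times_\pi D_2$ (a curve of genus $0$ or $1$ with dense rational points) then map to points of $\PP^1(k)$ avoiding all the $Y_i(k)$, and Silverman's specialization theorem finishes. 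Without varying $D$ over the pencil there is no way to defeat an arbitrary finite collection of covers $Y_i$, since one of them could be taken to be $C \to \PP^1$ itself.
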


Informally,	Theorem \ref{thm:conic} says that there is no finite list of non-trivial polynomial conditions that contains the set of $t$ for which the rank of the fibre $X_t$ is at least $r+2$. Note that in the stated level of generality, it was not even known that the set under consideration is infinite.  Theorem \ref{thm:conic} seems particularly surprising in the light of a conjecture of Silverman \cite[p.~556]{Sil85}, which predicts that $100\%$ of $t$, when ordered by height, have rank $r$ or $r+1$.

Our hypothesis that $X$ admits a bisection of arithmetic genus $0$ is a natural one. The minimal models of a geometrically rational surface are either del Pezzo surfaces or conic bundle surfaces \cite[Thm.~1]{Isk79}, and in the latter case a smooth conic gives a bisection.

We  are also able to obtain results for quadratic twist families which are excluded by Theorem \ref{thm:conic}. Such surfaces have a very special form and can be completely described (see \S \ref{sec:Chatelet}). Our results here are as follows.

\begin{theorem} \label{thm:non_reduced}
	Let $f,g$ be separable non-constant polynomials with $\deg f = 3$
	and $\deg g \leq 2$. Let $X$ be a smooth projective model
	of the surface
	$$g(t) y^2 = f(x) \qquad \subset \mathbb{A}^3_k$$
	equipped with the elliptic fibration $\pi$ given by projecting to
	the $t$-coordinate.
	\begin{enumerate}
		\item The set
	$\{ t \in \PP^1(k) : \rank X_t(k) \geq 1\}$
	is not thin.
		\item If both $g$ and $f$ have a root over $k$, then
		 $\{ t \in \PP^1(k) : \rank X_t(k) \geq 2\}$ is not thin.
	\end{enumerate}
\end{theorem}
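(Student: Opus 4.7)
My plan is to tackle both parts by constructing auxiliary elliptic surfaces obtained from $X$ via base change, applying Silverman--N\'eron specialization, and then descending non-thinness to $\PP^1_t(k)$ by exploiting the Hilbert property of $X$ as a rational surface. In preparation, note that $X$ is geometrically rational under the degree hypotheses, and that the generic rank of $\pi$ vanishes: the generic fibre $E^{g(t)}$ is a quadratic twist of the constant curve $E \colon y^2 = f(x)$, and over the genus-zero cover $u^2 = g(t)$ this twist becomes trivial, from which one deduces that $E^{g(t)}(k(t)) = E(k)[2]$ is pure torsion.

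For Part (1), the key construction is as follows. For each $y_0 \in k^*$, the curve
$$C_{y_0} := \{(x,y_0,t) \in X : g(t)\, y_0^2 = f(x)\}$$
is a multisection of $\pi$ of arithmetic genus zero, and the natural inclusion $C_{y_0} \hookrightarrow X$ defines a section of the pulled-back elliptic surface $(\pi|_{C_{y_0}})^{*}X \to C_{y_0}$. This section is non-torsion because its $x$-coordinate is transcendental over $k$ when viewed on $C_{y_0}$; N\'eron's specialization theorem therefore yields $\rank X_t(k) \geq 1$ for $t$ in a cofinite subset of $\pi(C_{y_0}(k))$. Taking the union over $y_0 \in k^*$, the set $\{t : \rank X_t(k) \geq 1\}$ contains the image $\pi\bigl(X(k) \setminus (\text{torsion sections})\bigr)$. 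To conclude non-thinness I would invoke (i) the Hilbert property for the rational surface $X$, and (ii) the fact that $\pi$ has geometrically connected elliptic fibres and does not factor through a nontrivial cover of $\PP^1$; together these imply that $\pi$ sends non-thin subsets of $X(k)$ to non-thin subsets of $\PP^1_t(k)$.

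For Part (2), the $k$-roots $f(\alpha) = 0$ and $g(\beta) = 0$ supply a rational $2$-torsion section $T = (\alpha, 0)$ of $\pi$ and a $k$-point on the double cover $Y \colon u^2 = g(t)$, making $Y \cong \PP^1_k$. The base change $X_Y := X \times_{\PP^1_t} Y$ is birational to $E \times_k Y$. My plan is to produce two linearly independent non-torsion sections of $X_Y \to Y$: the first by base-changing Part (1)'s auxiliary section to $Y$, contributing one unit of rank; the second by using the $2$-isogeny $\phi \colon E \to E/\langle T \rangle$ together with $\hat\phi$-descent on the isogenous quadratic twist family $(E/\langle T \rangle)^{g(t)}$ to produce an independent non-torsion section on the base change. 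Silverman applied to $X_Y \to Y$ then gives rank $\geq 2$ on a cofinite subset of $Y(k)$, and the Hilbert-property descent from Part (1), now applied to each of the two independent sections, yields the required non-thin subset of $\PP^1_t(k)$.

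The principal technical obstacle for Part (1) is the descent of non-thinness under $\pi$: formulating precisely and verifying the statement ``$\pi$ sends non-thin subsets of $X(k)$ to non-thin subsets of $\PP^1_t(k)$'' using the Hilbert property of $X$ together with the irreducibility of the generic fibre. For Part (2), the delicate point is establishing the linear independence of the two constructed sections in the Mordell--Weil group of $X_Y$, in light of possible relations through the $2$-isogeny kernel, so that the rank-$\geq 2$ conclusion genuinely survives specialization on a non-thin set.
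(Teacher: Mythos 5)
Your proposal has genuine gaps in both parts. For part (1), the skeleton (produce a non-torsion section after base change to a multisection through each rational point, then push non-thinness of $X(k)$ down through $\pi$ using the fact that $X\times_{\PP^1}Y_i\to X$ is an irreducible cover of degree $\geq 2$) is viable and is in fact close to what the paper does in its hardest case. But the linchpin is your appeal to ``the Hilbert property for the rational surface $X$'': $X$ is only \emph{geometrically} rational, it need not be $k$-rational, and the Hilbert property for geometrically rational (even unirational) surfaces over number fields is an open problem. This is precisely the key arithmetic input, and the paper obtains it by a special feature of these surfaces: writing $g(t)=t^2-a$ and setting $w=yt$ turns $g(t)y^2=f(x)$ into the Ch\^atelet surface $w^2-ay^2=f(x)$, which satisfies weak weak approximation by Colliot-Th\'el\`ene--Sansuc--Swinnerton-Dyer and hence the Hilbert property. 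Without some such argument your part (1) does not close. Two secondary problems: when $\deg g=2$ the curve $C_{y_0}\colon g(t)y_0^2=f(x)$ is a double cover of the $x$-line branched along a cubic, so it has genus $1$, not arithmetic genus $0$ (so $C_{y_0}(k)$ may be finite and Silverman/N\'eron need not apply to it); and you discard a finite exceptional set for \emph{each} $y_0$, and the union of these over infinitely many $y_0$ need not be thin, so the quantifiers must be reordered (fix the covers $Y_i$ first, then choose a single good multisection), as the paper does.

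Part (2) is fatally flawed. After base change along $Y\colon u^2=g(t)\cong\PP^1_k$ the twist is trivialised, so $X_Y\to Y$ is a \emph{constant} family, birational to $E\times\PP^1$; its Mordell--Weil group over $k(Y)$ is just $E(k)$, so there are no new independent sections to ``produce'' by any construction, isogeny descent included. Concretely, for every $t_0$ with $g(t_0)$ a nonzero square the fibre $X_{t_0}$ is isomorphic to $E$ over $k$ and has rank exactly $\rank E(k)$ --- no jump at all --- and in any case $\pi(Y(k))$ is the image of a degree-$2$ cover of $\PP^1$ and hence thin by definition, so nothing non-thin can come out of specialising along $Y$. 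The roots of $f$ and $g$ are needed for a different purpose: the $2$-torsion section $(\alpha,0)$ together with the $k$-rationality of the two non-reduced ($I_0^*$) fibres is used to build a \emph{second} conic bundle on $X$, distinct from the Ch\^atelet bundle, whose smooth fibres meet the non-reduced fibres of $\pi$ in reduced components and therefore have \emph{moving} branch points over $\PP^1_t$ (unlike the Ch\^atelet conics, which all branch over the roots of $g$). One then base changes by $D_1\times_\pi D_2$ for two such conics and applies Salgado's base-change theorem twice to gain rank $2$, before descending to $\PP^1(k)$ as in part (1).
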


The surfaces in this theorem have generic rank $0$.
Taking $g(t) = t$, Theorem~\ref{thm:non_reduced} applies to the usual quadratic twist family of an elliptic curve. Here various explicit constructions of infinitely many quadratic twists with rank at least $2$ are known (see e.g.~\cite[\S8]{ST95} or \cite[\S3]{RS01}); however all these constructions produce a thin set of fibres, so do not allow one to prove part (2) of Theorem \ref{thm:non_reduced}.

The family in Theorem \ref{thm:non_reduced} has a bisection of arithmetic genus $0$, given by taking $x = x_0$ where $f(x_0) \neq 0$. Our theorems combined show that any geometrically rational elliptic surface with a bisection of arithmetic genus $0$ admits rank jumps on a non-thin set (see Theorem \ref{thm:once}).

Various examples to which our results apply can be found in \S\ref{sec:examples}; these include some rational elliptic surfaces and del Pezzo surfaces of degree $1$, where we generalise a result of  Koll\'ar and Mella \cite{KM}.

\subsection{Methodology and relation to the literature}
To prove our results, we show that any such bisection deforms into a family of bisections and gives rise to a conic bundle on $X$. This puts us into a position to apply the methods of \cite{salgado1}, where it is shown that all but finitely many of the conics increase the generic rank after base-change. Choosing such a conic with infinitely many rational points, Salgado used this \emph{ibid}.~to get infinitely many rank jumps for unirational elliptic surfaces. (Note that our surfaces are also unirational, cf.~Lemma \ref{lem:unirational}.)

However in our case we cannot just use a single conic, since the image of its rational points in $\PP^1(k)$ is clearly thin! We therefore have to consider all conics at once. The function field of each conic gives a quadratic extension of $k(\PP^1)$, and the key step in the proof is to show that we obtain infinitely many isomorphism classes of field extensions this way, whilst still guaranteeing that the conic has a rational point. For most $X$, this is relatively simple as the branch loci of the quadratic extension can be shown to move. However, the most difficult case turns out to be the quadratic twist family from Theorem  \ref{thm:non_reduced}. We perform a detailed analysis of the geometry of these surfaces in \S \ref{sec:Chatelet}, which have the distinguishing feature that there are two non-reduced fibres. For our work we need the observation that such surfaces are birational to Ch\^{a}telet surfaces, which allows us to use results of Colliot-Th\'{e}l\`{e}ne,  Sansuc, and Swinnerton-Dyer  \cite{CTSSD} on the Brauer--Manin obstruction to weak approximation for Ch\^{a}telet surfaces. This method is sufficient to obtain rank jump $1$ (Theorem \ref{thm:once}).

To get rank jump $2$  we push the method further, with again particular attention when there are non-reduced fibres. Note that Salgado also obtains a result \cite[Thm.~1.3]{salgado1} where it is shown that there are infinitely many fibres where the rank jumps at least twice; however her result contains technical assumptions (e.g.~$X$ is rational with two conic bundles and $\pi$ has at most one non-reduced fibre). It is exactly our more careful analysis of the more delicate case of quadratic twists which allows us to remove these technical assumptions, as well as the realisation that one can get rank jump $2$ by base-changing by the same conic bundle twice.

In \cite{Billard}, Billard obtains a similar result to \cite{salgado1} restricted to rational surfaces with a non-isotrivial elliptic fibration. His methods are quite different and make use of height theory. Assuming various standard conjectures, he shows in  \cite[p.~69]{Billard} that for every $\varepsilon >0$ there are $\gg B^{1- \varepsilon}$  points of (naive) height bounded by $B$ in $\PP^1(\QQ)$ for which the rank jumps. This is insufficient  to conclude that the subset under consideration is not thin, as the best known upper bound for points of height at most $B$ in a thin set is $O(B^{3/2}(\log B))$ \cite[Thm.~13.3]{Ser97}. Other related works include proofs of the Hilbert property for certain elliptic K3 surfaces \cite{CZ17,Dem20,Demeio}. These allow one to obtain rank jumps on a non-thin set in certain circumstances (e.g. if the generic rank is 0).

After seeing a talk by the second-named author on our work at the conference ``Rational points on Fano and similar varieties'' in Paris in May 2019, Colliot-Th\'el\`ene devised an alternative method to prove results about rank jumps and thin sets \cite{CT19}. He showed that the set of fibres where the rank jumps by at least $1$ is not thin, if $X$ satisfies so-called ``weak weak approximation''. This property is not known in our generality (e.g.~for the del Pezzo surfaces of degree $1$ considered in \S \ref{sec:DP1}). He is  able to obtain results about rank jump $2$, but only when the generic fibre has complex multiplication over $k$. So his results only recover the statement of Theorem \ref{thm:conic} in the very special case where $X$ satisfies weak weak approximation and the generic fibre has complex multiplication over $k$.

\subsection*{Notation}
Let $C$ be a projective curve (not necessarily integral). We define its arithmetic genus to be $p_a(C) = 1 - \chi(C,\OO_C)$. If $C$ is geometrically integral, we define its (geometric) genus  $g(C)$ of $C$ to be the genus of the normalisation of $C$.

Let $f:X \to Y$ be a morphism of varieties and $Z_1,Z_2 \subset X$ closed subvarieties. We denote by $Z_1 \times_f Z_2$ the fibre product of $Z_1$ and $Z_2$ which respect the morphisms $f_{|Z_i} :Z_i \to Y$.

\subsection*{Acknowledgements}
The authors would like to thank the Max Planck Institute for Mathematics in Bonn and the Institut Henri Poincar\'e in Paris for their hospitality. We are grateful to Sam Streeter and Jean-Louis Colliot-Th\'el\`ene for useful comments on the paper, and Matthias Schuett for helpful discussions on the arithmetic of elliptic fibrations. We also thank the referee for numerous useful remarks. The first-named author is supported by EPSRC grant EP/R021422/2. The second-named author is partially supported by FAPERJ grant E-26/203.205/2016,  the Serrapilheira Institute (grant number Serra-1709-17759) and the Capes-Humboldt program.

\section{Preliminaries} \label{sec:pre}
In this section we gather various definitions and basic results.
Let $k$ be a field.

\subsection{Elliptic surfaces}

\begin{definition}  \label{def:elliptic_surface}
An \emph{elliptic surface} over $k$ is a smooth projective surface $X$ together with a morphism $\pi: X \to B$ to some curve $B$ which admits a section and whose generic fibre is a smooth curve of genus $1$.

\end{definition}
We fix a choice of section to act as the identity element for each smooth fibre.
We say that $\pi$ is \emph{relatively minimal} if every birational morphism $f: X \to X'$ of elliptic surfaces over $k$ with $\pi' \circ f = \pi$ is an isomorphism.

We say that $X$ is \emph{constant} if its generic fibre is constant, i.e.~its generic fibre is the base change of some elliptic curve over $k$.

We call a geometrically integral curve $C \subset X$ a \emph{multisection} if the map $C \to B$ induced by $\pi$ is finite flat. We define the degree of $C$ to be the degree of the induced map. A bisection is a multisection of degree $2$. (By a \emph{curve} on a smooth projective surface  we mean an effective divisor, viewed as a closed subscheme.)

\begin{remark} \label{rem:relatively_minimal}
	An elliptic surface is relatively minimal over $k$ if and only if it is relatively
	minimal over $\bar{k}$. This follows from \cite[Cor.~9.3.24, Prop.~9.3.28]{Liu}.
\end{remark}

\begin{remark} \label{rem:anticanonical}
	Let $X\to \PP^1$ be a geometrically rational relatively minimal elliptic surface over $k$. Then the elliptic fibration on $X$ is unique and given by the anticanonical linear system $|-K_X|$ \cite[\S8.3]{ShiodaSchuett}.
 \end{remark}
 
To force the generic rank to jump, we use the following result of Salgado. In the statement, the base-changed surface $X \times_{\pi} D$ need no longer be regular, but its generic fibre is still an elliptic curve hence the generic rank is still well-defined.

\begin{theorem} \label{thm:pencil}
	Let $\pi:X \to B$ be an
	elliptic surface over a number field $k$  and 
	$L$ a pencil of curves on $X$ with an element
	which is a multisection of $\pi$.
	Assume that either $g(B) = 0$ or that $X$ is non-constant.
	Then for all but finitely many $D \in L$, the base-changed 
	surface $X \times_\pi D \to D$ has generic rank strictly larger
	than the generic rank of $X \to B$.
\end{theorem}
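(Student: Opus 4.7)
My plan is to exhibit a canonical ``new'' section on each base-changed elliptic surface and show that it stays independent of the pullback of $X_\eta(k(B))$ for all but finitely many $D$ in the pencil, via a universal construction followed by a specialization argument.

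The geometric setup is as follows. Let $\beta:\tilde X\to X$ resolve the (scheme-theoretic) base locus of the pencil $L$, with induced morphism $\phi:\tilde X\to L\cong\PP^1$. Since $L$ contains a geometrically integral multisection of degree $d\ge 2$, and the intersection number with a fibre of $\pi$ is constant on $L$, Bertini's theorem for pencils ensures that for all but finitely many $D\in L$ the fibre $\phi^{-1}(D)$ is the proper transform of a geometrically integral multisection of $\pi$, again of degree $d$. For such $D$ the base change $X_D:=X\times_\pi D\to D$ is an elliptic fibration equipped with the pullback of the zero section of $\pi$ together with a canonical ``diagonal'' section $s_D: d\mapsto(d,d)$. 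These assemble into a global object: setting $Y:=X\times_B\tilde X$ (via $\pi$ and $\pi\circ\beta$), the second projection $p_2:Y\to\tilde X$ is an elliptic fibration, and the diagonal $\Delta:\tilde X\to Y$, $y\mapsto(\beta(y),y)$, is a section whose restriction to each fibre of $\phi$ recovers $s_D$.

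The core of the argument is to see that $\Delta$, regarded as an element of the Mordell--Weil group of $Y\to\tilde X$ and evaluated at the generic point of $\phi$, is non-torsion modulo the subgroup pulled back from $X_\eta(k(B))$. Using the identifications $k(\tilde X)\cong k(X)\cong k(D_\eta)$, this section corresponds to the tautological ``generic'' $k(X)$-point of the elliptic curve $X_\eta/k(B)$, and hence a relation $n\Delta = Q$ with $n>0$ and $Q\in X_\eta(k(B))$ would say that $n\cdot\mathrm{id}_{X_\eta}$ is the constant morphism $Q$---impossible for any non-trivial elliptic curve. Therefore the generic Mordell--Weil rank of $X_{D_\eta}/D_\eta$ (over $k(L)$) strictly exceeds the generic rank $r$ of $X/B$.

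To deduce the cofinite statement from this generic one, I would apply Silverman's specialization theorem to the family $Y\to\tilde X\to L$ parametrized by the rational curve $L\cong\PP^1$: the rank of $X_{D_t}(k(D_t))$ as $t\in L(k)$ varies is controlled by specialization from the generic rank, and the exceptional locus is finite (rather than merely thin) because $L$ is a single rational curve and we are tracking specialization of a single generic non-torsion class in a one-parameter family. The main obstacle is in this last specialization step: the hypothesis that either $g(B)=0$ or $X$ is non-constant intervenes precisely to avoid the isotrivial case $X\cong E\times B$ with $g(B)>0$, where the Mordell--Weil group contains transcendental contributions from $\Hom(J_B,E)$ and one has to rule out pathological pencils in which the ``new'' section degenerates into such a transcendental piece on an infinite set of specializations.
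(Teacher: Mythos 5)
Your construction of the diagonal section and your argument that it is ``new'' at the generic member of the pencil are correct: identifying $k(\tilde X)\cong k(X)\cong k(X_\eta)$, the diagonal is the tautological point of $X_\eta$, and a relation $n\Delta\in X_\eta(k(B))+\mathrm{torsion}$ would force multiplication by a nonzero integer on $X_\eta$ to be a constant morphism. This is exactly the mechanism the paper uses in the one case it argues directly (the constant case $X=E\times B$, $g(B)=0$, where the old sections are literally constant maps $D\to E$ and the diagonal is a non-constant one, so independence holds for \emph{every} integral multisection with no further work). For the non-constant case the paper does not reprove anything: it simply cites \cite[Cor.~4.3]{salgado1}, which is the result you are trying to establish from scratch.

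The genuine gap is your final specialization step. Silverman's theorem (\cite[Thm.~C]{Sil}, Theorem \ref{thm:Silverman} in the paper) concerns the specialization map $A(k(C))\to A_t(k)$ from an abelian variety over the function field of a curve to its fibres over \emph{number-field} points $t\in C(k)$. In your family $Y\to\tilde X\to L$, specializing $t\in L(k)$ produces an elliptic curve over the function field $k(D_t)$, not over $k$, so Silverman's theorem says nothing here. What you can say cheaply is that, for a \emph{fixed} integer $n$ and a fixed old section $Q$, the locus of $t$ where $n\Delta|_{D_t}=Q|_{D_t}+(\text{torsion})$ is closed in $L$, hence finite; but the exceptional set is the union of these loci over countably many pairs $(n,Q)$, and $\PP^1(k)$ is itself countable, so ``a countable union of finite sets'' proves nothing. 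Ruling this out requires a uniformity input --- in Salgado's proof this is supplied by computing the canonical height pairing of $\Delta|_{D_t}$ against the old sections via intersection numbers on (a desingularisation of) $X\times_\pi D_t$, which are constant in $t$ away from finitely many bad members, so the Gram determinant is a single nonzero number independent of $t$. Without such an argument (or some substitute, e.g.\ a specialization theorem for N\'eron--Severi/Mordell--Weil lattices in the family over $L$), your proof establishes the rank jump only for the generic member of the pencil, not for all but finitely many members. Your closing speculation also slightly misplaces the role of the hypothesis ``$g(B)=0$ or $X$ non-constant'': it is not needed for your tautological-point computation, but it is needed both for the cited result of Salgado and for the later applications of Silverman's theorem; the paper disposes of the excluded constant case by the elementary argument sketched above.
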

\begin{proof}
	If $X$ is non-constant then this is \cite[Cor.~4.3]{salgado1}.
	Otherwise, after a birational transformation, we may assume that
	$X = E \times B$ for some elliptic curve $E$ over $k$ and with $g(B) = 0$.
	As $g(B) = 0$ any map $B \to E$ is constant, thus any section of $\pi$
	is constant, i.e.~$X(B) = E(k)$. 
	Let $D \in L$ be a multisection. 
	Sections are $(-1)$-curves \cite[Cor.~6.9]{ShiodaSchuett}
	hence do not move in a linear system, thus $D$ is not a section.	
	The base-changed surface
	is the constant surface $X \times_{\PP^1} D = E \times D \to D$
	over $D$. This has a new section given by
	$$D \to X \times_{\PP^1} D, \quad x \mapsto (x,x),$$
	which is easily checked to be independent from the old sections.
\end{proof}

We make numerous uses of the following theorem of Silverman \cite{Sil}.

\begin{theorem} \label{thm:Silverman}
	Let $\pi:X \to B$ be an elliptic surface
	over a number field $k$ with generic rank $r$.  
	Assume that either $g(B) = 0$ or that $X$ is non-constant.
	Then the set
	$\{ t \in B(k) : \rank X_t(k) < r\}$
	is finite.
\end{theorem}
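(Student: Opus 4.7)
The plan is to invoke Silverman's specialization theorem for the non-constant case and handle the constant case with $g(B)=0$ by a direct geometric argument. The statement as given is precisely Silverman's theorem plus an observation for the constant case, so the main work has already been done in \cite{Sil}; my proposal is to explain how that result applies and then patch the constant case.

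For the non-constant case, the approach is via height theory. Choose independent generators $P_1,\dots,P_r$ of the Mordell--Weil group $E_\eta(k(B))$ of the generic fibre. For all but finitely many $t\in B(k)$ (namely those for which $X_t$ is smooth) the specialization map $\sigma_t: E_\eta(k(B)) \to X_t(k)$ is a well-defined group homomorphism. The goal is to show that $\sigma_t(P_1),\dots,\sigma_t(P_r)$ remain $\mathbb{Z}$-linearly independent for all but finitely many $t$, which gives $\rank X_t(k)\geq r$. The key tool is a comparison between the canonical height $\hat h_{E_\eta}$ on $E_\eta(k(B))$ and the Néron--Tate heights $\hat h_{X_t}$ on the smooth fibres: one proves
$$\lim_{h_B(t)\to \infty}\frac{\hat h_{X_t}(\sigma_t(P))}{h_B(t)}=\hat h_{E_\eta}(P),$$
where $h_B$ is a Weil height on $B$. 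Combined with the Northcott property on $B(k)$, this forces any hypothetical specialization dependence occurring for infinitely many $t$ to come from a genuine dependence in $E_\eta(k(B))$, contradicting the choice of the $P_i$.

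For the constant case with $g(B)=0$, one can (after a birational transformation of elliptic surfaces) assume $X=E\times B$ with $E/k$ an elliptic curve and $B\cong\PP^1$. Any section of $\pi$ projects to a morphism $B\to E$, and since $g(B)=0$ while $E$ has genus $1$, every such morphism is constant. Hence $E_\eta(k(B))=E(k)$ and $r=\rank E(k)$. But every smooth fibre $X_t$ is isomorphic to $E$ over $k$, so $\rank X_t(k)=\rank E(k)=r$ for all $t$ such that $X_t$ is smooth. Thus the exceptional set in the constant case is even empty (modulo the finite set of singular fibres), and the union of the two cases covers the hypothesis of the theorem.

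The main obstacle is the height comparison estimate in the non-constant case; this is the technical heart of Silverman's paper \cite{Sil}, and controlling the error terms requires a careful use of the Néron model of $E_\eta$ and the behaviour of local heights in families. Since this estimate is already available in the literature, the role of this theorem in the present paper is simply as a black box, and no new input is needed beyond the elementary reduction just described for the constant case.
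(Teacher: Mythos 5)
Your proposal is correct and follows essentially the same route as the paper: the non-constant case is quoted directly from Silverman \cite[Thm.~C]{Sil} (the paper does not reprove the height comparison either), and the constant case with $g(B)=0$ is reduced to $X = E \times \PP^1$, where the observation that every morphism $\PP^1 \to E$ is constant identifies the generic Mordell--Weil group with $E(k)$ and shows no fibre can have smaller rank. Nothing further is needed.
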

\begin{proof}
	If $X$ is non-constant then this is \cite[Thm.~C]{Sil}.
	Otherwise it suffices to treat the case 
	$X = E \times \PP^1$, where the result follows immediately from the 
	fact that any section is constant.
\end{proof}

\begin{remark}
	Theorem \ref{thm:Silverman} requires
	that the elliptic surface is not constant in general. For example,
		take $E$ an elliptic curve  of rank $1$
	and $X = E \times E \to E$ a constant surface over $E$.
	Then the fibre over every rational point has rank $1$,
	but the generic fibre has rank at least $2$, with a new section given by 
	$$E \to E \times E, \quad x \mapsto (x,x).$$
\end{remark}

Since we will use it numerous times, for ease of reference we recall the adjunction formula \cite[Thm.~9.1.37]{Liu}: For a smooth projective surface $X$ and curve $C \subset X$ we have
\begin{equation} \label{eqn:adjunction}
	2p_a(C) - 2 = C(C + K_X).
\end{equation}

\subsection{Conic bundles}

We use the following definition of conic bundles.
\begin{definition}\label{def:conic bundle}
	A \emph{conic bundle} on a smooth projective surface $X$
	is a dominant morphism $X \to \PP^1$ whose generic
	fibre is a smooth geometrically irreducible curve of genus $0$.
\end{definition}

Note that, contrary to some authors,  we do \emph{not} require that every fibre of a conic bundle is isomorphic to a plane conic.

\begin{lemma} \label{lem:conic_bundle}
	Let $X$ be a smooth projective
	surface over $k$ with $\HH^1(X,\OO_X) = 0$
	and $C \subset X$ a  curve of arithmetic genus $0$ 
	with $-K_X \cdot C = 2$. Then the complete linear system $|C|$ is a pencil.
	If moreover $C$ is geometrically integral then it induces a conic bundle on $X$.
\end{lemma}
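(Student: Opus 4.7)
The plan is to combine the adjunction formula with a standard short exact sequence to compute $h^0(\OO_X(C))$, and then promote the resulting pencil to a conic bundle through intersection-theoretic arguments. First I would apply the adjunction formula \eqref{eqn:adjunction} with $p_a(C) = 0$ and $K_X \cdot C = -2$ to obtain $C^2 = 0$, and then Riemann--Roch on $X$ yields
\[
\chi(\OO_X(C)) = \chi(\OO_X) + \tfrac{1}{2} C \cdot (C - K_X) = \chi(\OO_X) + 1.
\]

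For the pencil statement, I would exploit the short exact sequence
\[
0 \to \OO_X \to \OO_X(C) \to \OO_X(C)|_C \to 0.
\]
The restricted line bundle $\OO_X(C)|_C$ has degree $C^2 = 0$ on $C$. When $C$ is geometrically integral of arithmetic genus $0$, it is a smooth conic over $k$, and $\Pic(C)$ embeds in $\Pic(C_{\bar k}) = \ZZ$ (with image contained in $2\ZZ$ when $C(k) = \emptyset$), so any degree-zero line bundle on $C$ is trivial; hence $\OO_X(C)|_C \cong \OO_C$. Using $\HH^1(X, \OO_X) = 0$, the long exact sequence in cohomology then gives
\[
h^0(\OO_X(C)) = h^0(\OO_X) + h^0(\OO_C) = 2,
\]
so $|C|$ is a pencil. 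For the pencil statement in the possibly non-integral case, one can instead argue from the Riemann--Roch expression above combined with the vanishings $h^0(K_X) = 0$ and $h^0(K_X - C) = 0$: an effective section of $K_X$ would intersect $C$ negatively in a manner forbidden by $C^2 = 0$, and any effective $D \sim K_X - C$ would give $D + C \sim K_X$ effective, contradicting the first vanishing.

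Finally, to produce the conic bundle, I would verify that $|C|$ is base-point-free. There is no fixed component, since any fixed component is a subdivisor of the irreducible $C$, hence is either $0$ or $C$ (the latter contradicting $\dim |C| = 1$). There is no isolated base point either, since any two distinct members $D_1, D_2 \in |C|$ satisfy $D_1 \cdot D_2 = C^2 = 0$ and, having no common component, must be disjoint. Hence $|C|$ defines a morphism $\phi: X \to \PP^1$ whose fibres lie in $|C|$ and so have arithmetic genus zero. The generic fibre is smooth by openness of smoothness applied at points of the smooth fibre $C$, and is geometrically irreducible by Stein factorisation together with the geometric connectedness of $C$, giving the required conic bundle. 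I expect the main obstacle to be ensuring the triviality of $\OO_X(C)|_C$ in the absence of a rational point on $C$; this is dispatched via the standard description of $\Pic$ for Severi--Brauer varieties of dimension one.
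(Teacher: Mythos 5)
Your argument is essentially the paper's: the same short exact sequence $0 \to \OO_X \to \OO_X(C) \to \OO_C(C) \to 0$, the computation $C^2=0$ from adjunction, and base-point-freeness deduced from $C^2=0$. You in fact supply more justification than the paper at the key step $h^0(\OO_C(C))=1$, correctly noting via $\Pic$ of a Severi--Brauer curve that the degree-zero bundle $\OO_C(C)$ is trivial even when $C(k)=\emptyset$. The one caveat is your alternative Riemann--Roch route for the non-integral case: with $h^0(K_X)=h^0(K_X-C)=0$ it gives $h^0(\OO_X(C)) = 2 + h^1(\OO_X(C)) \ge 2$, i.e.\ only $\dim|C|\ge 1$; the exact equality asserted by ``pencil'' still requires $h^1(\OO_X(C))=0$, which by the long exact sequence (using $\HH^1(X,\OO_X)=\HH^2(X,\OO_X)=0$) reduces back to showing $h^0(\OO_C(C))=1$ for the possibly non-integral curve $C$ --- the very computation the cohomological argument was meant to replace.
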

\begin{proof}
	Consider the exact sequence of sheaves on $X$
	$$
		0 \to \OO_X \to \OO_X(C) \to \OO_C(C) \to 0.
	$$
	Applying cohomology gives 
	$$ 0 \to \HH^0(X, \OO_X) \to \HH^0(X, \OO_X(C)) \to \HH^0(X, \OO_C(C))
	\to \HH^1(X, \OO_X).$$
	The adjunction formula \eqref{eqn:adjunction}  implies
	$C^2=0$, thus $\dim \HH^0(X, \OO_C(C)) =1$. Using 
	our assumption $\HH^1(X, \OO_X)=0$, we find that $\dim |C| = 1$. 
	If $C$ is geometrically integral, as $C^2 = 0$ 
	the linear system $|C|$ is base-point-free and the generic
	member is  geometrically integral of arithmetic genus $0$,
	hence smooth \cite[Prop.~7.4.1]{Liu}.
\end{proof}

\begin{remark} \label{rem:fixed_component}
	Without the assumption that $C$ is geometrically integral, one need not
	obtain a conic bundle in general. Take $X \to \PP^1$ a rational elliptic surface
	and $C = -K_X + 2E$ where $E$ is a section. Then one checks that $p_a(C) = 0$
	and $-K_X \cdot C = 2$, but $|C|$ does not induce a conic bundle. 
	By Lemma \ref{lem:conic_bundle} one has
	$\dim |C| = 1$, but the problem
	is that $2E$ is a fixed component of $|C|$.
\end{remark}

\begin{lemma} \label{lem:strange}
	Let $X$ be a smooth projective surface over a field $k$ of characteristic
	$0$. Let $L$ be a base-point-free pencil on $X$ and $F \subset X$ a
	reduced curve. Then for all but finitely many $C \in L$,
	the (scheme-theoretic) intersection $C \cap F$ is reduced.
\end{lemma}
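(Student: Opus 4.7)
The plan is to translate the statement into one about the morphism $\phi \colon X \to \PP^1$ associated to the base-point-free pencil $L$, whose (scheme-theoretic) fibres are exactly the members of $L$. Under this identification, for $C = \phi^{-1}(t) \in L$ the scheme-theoretic intersection $C \cap F = F \times_X C$ is nothing but the scheme-theoretic fibre of the restriction $\phi|_F \colon F \to \PP^1$ over $t$, so it suffices to show that the fibres of $\phi|_F$ are reduced for all but finitely many $t \in \PP^1$.

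Write $F = F_1 \cup \cdots \cup F_n$ for the decomposition into irreducible components. I would first discard the finitely many values $t_i$ for which some $F_i$ is contracted to a single point $t_i$ by $\phi$. Next, I discard the (finitely many) images under $\phi$ of the singular locus of $F$; in particular, this removes the pairwise intersection points of $F_i \cap F_j$ for $i \neq j$, so for any remaining $t$ every point of $F \cap C$ is a smooth point of $F$ lying in a unique component $F_i$ that dominates $\PP^1$.

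For each such dominating $F_i$, the smooth locus $F_i^{\mathrm{sm}}$ is a dense open smooth curve and $\phi|_{F_i^{\mathrm{sm}}} \colon F_i^{\mathrm{sm}} \to \PP^1$ is a dominant morphism of smooth $k$-varieties. Since $\chr k = 0$, generic smoothness yields an open $V_i \subset \PP^1$ with finite complement over which this map is smooth. After further discarding the finitely many points of $\bigcup_i (\PP^1 \setminus V_i)$, the total bad set is still finite. For any $t$ outside it, each point $p \in F \cap C$ is a smooth point of $F$ at which $\phi|_{F_i^{\mathrm{sm}}}$ is smooth, so $F \cap C$ coincides locally at $p$ with the smooth (hence reduced) fibre of $\phi|_{F_i^{\mathrm{sm}}}$ over $t$, giving the desired reducedness. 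There is no substantive obstacle here: the argument is a direct bookkeeping application of generic smoothness in characteristic zero, once one reformulates $C \cap F$ as a fibre of $\phi|_F$ and separately handles the finitely many values of $t$ coming from contracted components, singular points of $F$, and ramification on the smooth loci.
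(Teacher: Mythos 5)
Your proof is correct and follows essentially the same route as the paper: both reduce the statement to the restriction $\phi|_F\colon F\to\PP^1$ of the morphism induced by $L$, and both use characteristic $0$ plus reducedness of $F$ to get (generic) smoothness, hence reducedness of all but finitely many fibres. Your write-up merely spells out the spreading-out bookkeeping (contracted components, singular points of $F$, ramification) that the paper leaves implicit.
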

\begin{proof}
	Consider the morphism $X \to \PP^1$ induced by $L$ and its restriction
	$F \to \PP^1$ to $F$. As $\chr k = 0$ and $F$ is reduced,
	the generic fibre is smooth.
	It follows that the intersection of a general element of $L$
	with $F$ is reduced, as claimed.
\end{proof}

\begin{remark}
	Take $k$ an infinite field of characteristic $2$,
	and	consider a smooth plane conic $F \subset \PP^2_k$. Then $F$
	admits a ``strange point'', namely a point $P \in \PP^2_k$
	such that the pencil of lines $L$ through $P$ meets $F$ tangentially \cite[p.~76]{Sam}.
	Blowing up $P$ we obtain a base-point-free pencil
	which meets the strict transform of $F$ always in
	a point of multiplicity $2$,
	and thus does not satisfy the conclusion of Lemma \ref{lem:strange}.
\end{remark}

\subsection{Thin sets} We use Serre's definition of thin sets \cite[\S 3.1]{Ser08}.

\begin{definition}
Let $X$ be a variety over a field $k$. A subset $Z\subseteq X(k)$ is called \emph{thin}  if it is
a finite union of subsets which are either contained in a proper closed subvariety of $X$, or contained 
in some $\pi(Y(k))$ where $\pi: Y \to X$ is a generically finite dominant morphism 
of degree exceeding $1$, with $Y$ an integral variety over $k$.
\end{definition}

 A variety is said to satisfy the \emph{Hilbert property} if its rational points are not thin. Hilbert's irreducibility theorem implies that  projective space over a number field satisfies the Hilbert property \cite[Thm.~3.4.1]{Ser08}. It is an open problem in general whether every unirational variety satisfies the Hilbert property.

\begin{remark} \label{rem:P^1}
	To prove that a subset $Z \subseteq \PP^1(k)$ is not thin, it
	suffices to show the following: 
	For any finite collection of finite morphisms $Y_i \to \PP^1$ of
	degree at least $2$ with the $Y_i$ smooth geometrically integral
	curves, there exists $P \in Z$ which is not
	in the union of the images of the maps $Y_i(k) \to \PP^1(k)$.
\end{remark}

\section{Elliptic surfaces with two non-reduced fibres} \label{sec:Chatelet}
In our proofs, we will need to treat carefully elliptic surfaces with 2 non-reduced fibres. We gather in this section the  geometric facts we will require about such surfaces. We work over a field $k$ of characteristic $0$ with algebraic closure $\bar{k}$.

\begin{proposition}\label{prop:twononred}
 Let $\pi:X \to \PP^1$ be a geometrically rational relatively minimal elliptic surface over $k$. Then the following are equivalent.
 \begin{enumerate}
 	\item The generic fibre of $X$ is a quadratic twist of a constant elliptic
 	curve.
 	\item There is a Weierstrass equation for $X$ of the form $g(t)y^2=f(x)$ over $k$,
 	where $f,g$ are separable non-constant polynomials with $\deg f = 3$
	and $\deg g \leq 2$. 
	\item The singular fibre configuration  of $\pi$ over $\bar{k}$ is $2I_0^*$.
	\item The map $\pi$ has more than one non-reduced fibre over $\bar{k}$.
 \end{enumerate}
\end{proposition}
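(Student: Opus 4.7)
The plan is to establish the cycle $(1) \Leftrightarrow (2) \Rightarrow (3) \Leftrightarrow (4) \Rightarrow (3) \Rightarrow (1)$; the equivalence $(1) \Leftrightarrow (2)$ is essentially a normal-form computation, $(2) \Rightarrow (3)$ and $(3) \Leftrightarrow (4)$ follow from Tate's algorithm and an Euler-characteristic count, while the main work is in $(3) \Rightarrow (1)$ via a quadratic base change. For $(1) \Leftrightarrow (2)$: suppose the generic fibre is the quadratic twist of $E_0/k : y^2 = f(x)$ by a class $d \in k(t)^*/k(t)^{*2}$, represented by a separable polynomial $g \in k[t]$. The equation $g(t)y^2 = f(x)$ becomes, under $X = gx$, $Y = g^2 y$, a Weierstrass model $Y^2 = X^3 + g^2 A X + g^3 B$ with discriminant $g^6 \Delta_0$, $\Delta_0 \in k^*$. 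Geometric rationality forces $\deg \Delta = 12$ under the minimality bounds $\deg(g^2 A) \leq 4$, $\deg(g^3 B) \leq 6$, hence $\deg g \leq 2$; non-constancy of $g$ follows since $E_0 \times \PP^1$ is not rational. The converse direction is a direct reading of the equation.

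For $(2) \Rightarrow (3)$: the local valuations $v_{t_0}(g^2 A) = 2$, $v_{t_0}(g^3 B) = 3$, $v_{t_0}(\Delta) = 6$ at each root $t_0$ of $g$ identify the fibre as type $I_0^*$ by Tate's algorithm. When $\deg g = 1$, the analogous computation at $\infty$ (using $v_\infty(g^2 A) = 2$, $v_\infty(g^3 B) = 3$) yields a second $I_0^*$. For $(3) \Leftrightarrow (4)$: an $I_0^*$ fibre contains a central component of multiplicity $2$, giving $(3) \Rightarrow (4)$; for the converse, each non-reduced Kodaira type ($I_n^*$, $IV^*$, $III^*$, $II^*$) has Euler characteristic $\geq 6$, while $\chi(X) = 12$ for a geometrically rational relatively minimal elliptic surface, so two such fibres must exhaust $\chi$ and both be of the minimal type $I_0^*$, with no other singular fibres.

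The main step $(3) \Rightarrow (1)$ proceeds by a quadratic base change. The Galois-stable set $\{t_1, t_2\} \subset \PP^1(\bar k)$ of $I_0^*$ locations is cut out by some separable $g \in k[t]$ with $\deg g \leq 2$. Let $\varphi : B' \to \PP^1_k$ be the smooth projective conic defined by $s^2 = g(t)$, which is a double cover ramified exactly at $\{t_1, t_2\}$. I take the relative minimal model $Y \to B'$ of the base change $X \times_\pi B'$. A local Tate computation at each ramification point shows that after the substitution $t - t_i = s^2$ and a standard rescaling, each $I_0^*$ degeneration resolves to a smooth fibre; away from the ramification $\varphi$ is \'{e}tale, so $Y \to B'$ is a smooth elliptic fibration with a section. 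Over $\bar k$ we have $B' \cong \PP^1$, and the $j$-invariant $j : B'_{\bar k} \to \mathbb{A}^1$ is regular hence constant, so $Y_{\bar k} \cong E_0 \times \PP^1_{\bar k}$ for some $E_0$; a descent argument (absorbing unramified twists of the constant fibration into $E_0$) yields $Y \cong E_0 \times B'$ over $k$ for some $E_0/k$. The deck involution of $\varphi$ then lifts to an involution $(\tau, \sigma_{B'})$ of $Y$ with $\tau \in \Aut(E_0)[2] = \{\pm 1\}$; since $\tau = 1$ would give $X \cong E_0 \times \PP^1$ with no singular fibres, we conclude $\tau = [-1]$, and hence $X$ is the quadratic twist of $E_0$ by the class of $g$, establishing (1).

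The main obstacle will be the Galois descent in the last step: the conic $B'$ need not be $k$-isomorphic to $\PP^1_k$, so the splitting $Y \cong E_0 \times B'$ over $k$ cannot be read off directly from the $j$-invariant. I anticipate handling this by combining the constancy of the $j$-invariant with a classification of the unramified quadratic twists of a constant elliptic fibration over $B'$, whose net effect can be absorbed into the choice of $E_0/k$; the delicate local-minimal-model check at the ramified points of $\varphi$, needed to declare $Y \to B'$ smooth, is the other technical point requiring care.
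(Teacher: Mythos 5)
Your proposal is correct in outline, but it takes a genuinely different route for the key closing implication. The paper closes the cycle by proving $(3)\Rightarrow(2)$ directly from the Weierstrass model: choosing $g$ with zero locus under the non-reduced fibres, one has $\Delta=-16(4A^3+27B^2)=cg^6$ together with $g\mid A$ (additivity, via Tate's algorithm) and hence $g\mid B$, from which an elementary divisibility chase forces $A=ag^2$, $B=bg^3$ and thus the normal form $g(t)y^2=f(x)$; statement $(1)$ then falls out for free. You instead prove $(3)\Rightarrow(1)$ geometrically: base change along the conic $s^2=g(t)$, observe that the two $I_0^*$ fibres resolve to good reduction, conclude constancy over $\bar k$ from the regular (hence constant) $j$-map, descend to $k$, and identify the deck involution with $[-1]$. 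This works — the descent you flag as delicate does go through (on a conic $B'$, $\Pic^0(B')=0$ and $\HH^0(B',\OO^*)=k^*$, so an everywhere-unramified twist is a constant twist absorbable into $E_0$, and your observation that $\tau^2=1$ forces $\tau=\pm1$ even when $j\in\{0,1728\}$ is correct since $\Gal(k(B')/k(t))$ acts trivially on the constant automorphisms) — but it is the one step you only sketch, and it requires machinery (relative minimal models under ramified base change, fibre-type transformation tables, twist classification) that the paper's two-line discriminant computation avoids entirely. Your Euler-characteristic proof of $(4)\Rightarrow(3)$ is essentially the paper's argument in different clothing, since the local Euler number of a fibre equals the order of vanishing of the minimal discriminant and $\deg\Delta\le 12$; the paper also offers Persson's classification as an alternative. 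Net assessment: your approach is more conceptual and explains \emph{why} the surface is a quadratic twist, at the cost of a descent argument that would need to be written out in full; the paper's is shorter and entirely self-contained at the level of polynomial identities.
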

\begin{proof}
	We first show that $(1)$ implies $(2)$. Our assumptions
	imply that the generic fibre of $X$ has an equation
	of the form	$g(t)y^2 = f(x)$ for some $g \in k(t)$.
	After making a change of variables, we may assume that $g \in k[t]$ and that $g$
	is separable. We obtain the Weierstrass equation
	\begin{equation} \label{eqn:W}
		y^2 = x^3 + a g(t)^2 x + bg(t)^3
	\end{equation}
	for some $a,b \in k$. As $g$ is separable, 
	the resulting equation is easily verified to be globally
	minimal \cite[\S4.8]{ShiodaSchuett}.
	However as $X$ is geometrically rational
	$g(t)$ must be  non-constant
	and $\deg g(t) \leq 2$ (see \cite[\S8.4]{ShiodaSchuett})
	
	
	That $(2)$ implies $(3)$ follows from a simple application
	of Tate's algorithm \cite[\S IV.9.4]{Sil94},
	whilst $(3)$ implies $(4)$ is trivial.
	
	We now prove that $(4)$ implies $(3)$. This can be shown by 
	inspecting the list 
	of configurations  in \cite[p.~7-13]{Persson}. Alternatively, choose
	a globally minimal Weierstrass equation  with discriminant
	$\Delta(t)$. Then a root of $\Delta$ corresponds to a singular fibre,
	and a non-reduced fibre gives rise to a root of multiplicity at least 6, with equality
	only for $I_0^*$ (see Table 4.1 in \cite[\S IV.9.4]{Sil94}). 
	As $\deg \Delta(t) \leq 12$ \cite[\S IV.8.4]{Sil94},
	the only possibility is  $2I_0^*$.
	
	For $(3)$ implies $(2)$, without loss of generality, we may assume 
	that the fibre at infinity is smooth. So let $g(t)$ be a separable
	quadratic polynomial whose zero locus lies below the non-reduced fibres.
	Then $X$ admits a Weierstrass equation $y^2=x^3+A(t)x+B(t)$ with
	$\deg (A(t))\leq 4$ and $\deg (B(t))\leq 6$ \cite[\S8.4]{ShiodaSchuett},
	and for some constant $c$ the discriminant satisfies 
	\begin{equation} \label{eqn:Delta}
	 \Delta(t)= -16(4A(t)^3+27B(t)^2)= cg(t)^6.
	 \end{equation}
 	Moreover $g(t)$ divides $A(t)$ as the singular fibres are additive 
 	(see Tate's algorithm \cite[\S IV.9.4]{Sil94}),  
 	hence also divides $B(t)$. From \eqref{eqn:Delta}, 
 	one easily deduces that there are constants $a$ and $b$ such that
 	$A(t)=ag(t)^2$ and $B(t)=bg(t)^3$. We then obtain an equation of the form
 	\eqref{eqn:W}, and a change of variables gives (2).
 	Finally, $(2)$ trivially implies $(1)$.
\end{proof}

We now let $\pi: X \to \PP^1$ be as in Proposition \ref{prop:twononred}. Recall that by a \emph{$(-n)$-curve} (for $n > 0$) on a smooth projective surface $S$, we mean a geometrically integral curve $C \subset S$ of arithmetic genus $0$ with $C^2 = -n$.

\begin{lemma} \label{lem:figure}
	The intersection graph of negative curves on $X_{\bar{k}}$ is given by Figure~\ref{exceptionalcurves}. The $T_i$ are $2$-torsion sections of $\pi$ and the two singular fibres of $\pi$ are given by $F_i=2E_i + L_{i,1} + L_{i,2} + L_{i,3} + L_{i,4}$. 
	
\begin{figure}[h]
\begin{tikzpicture}[thick,scale=1.4, every node/.style={scale=1}]
\draw (5,0) node{$\bullet$};
\draw (5,0) node[above]{\small{$E_1\,\,\,$}};
\draw (5,0) -- (6,1);
\draw (5,0) -- (6,0.35);
\draw (5,0) -- (6,-0.3);
\draw (5,0) -- (6,-1);
\draw (6,1) node{$\bullet$};
\draw (6,1) node[above]{\small{$L_{1,1}$}};
\draw (6,0.35) node{$\bullet$};
\draw (6,0.35) node[above]{\small{$L_{1,2}$}};
\draw (6,-0.3) node{$\bullet$};
\draw (6,-0.3) node[above]{\small{$L_{1,3}$}};
\draw (6,-1) node{$\bullet$};
\draw (6,-1) node[above]{\small{$L_{1,4}$}};
\draw (6,1) -- (7,1);
\draw (6,0.35) -- (7,0.35);
\draw (6,-1) -- (7,-1);
\draw (6,-0.3) -- (7,-0.3);
\draw (7,1) -- (8,1);
\draw (7,0.35) -- (8,0.35);
\draw (7,-1) -- (8,-1);
\draw (7,-0.3) -- (8,-0.3);
\draw[fill=white] (7,1) circle (2pt);
\draw (7,1) node[above]{\small{$T_1$}};
\draw[fill=white] (7,0.35) circle (2pt);
\draw (7,0.35) node[above]{\small{$T_2$}};
\draw[fill=white] (7,-0.3) circle (2pt);
\draw (7,-0.3) node[above]{\small{$T_3$}};
\draw[fill=white] (7,-1) circle (2pt);
\draw (7,-1) node[above]{\small{$T_4$}};
\draw (8,1) node{$\bullet$};
\draw (8,1) node[above]{\small{$L_{2,1}$}};
\draw (8,0.35) node{$\bullet$};
\draw (8,0.35) node[above]{\small{$L_{2,2}$}};
\draw (8,-0.3) node{$\bullet$};
\draw (8,-0.3) node[above]{\small{$L_{2,3}$}};
\draw (8,-1) node{$\bullet$};
\draw (8,-1) node[above]{\small{$L_{2,4}$}};
\draw (8,1) -- (9,0);
\draw (8,0.35) -- (9,0);
\draw (8,-1) -- (9,0);
\draw (8,-0.3) -- (9,0);
\draw (9,0) node[above]{\small{$\,\,\,E_2$}};
\draw (9,0) node{$\bullet$};
\end{tikzpicture}
\caption{The configuration of negative curves on $X_{\bar{k}}$. \newline We denote by $\bullet$ a $(-2)$-curve and $\circ$ a $(-1)$-curve. }
\label{exceptionalcurves}
\end{figure}
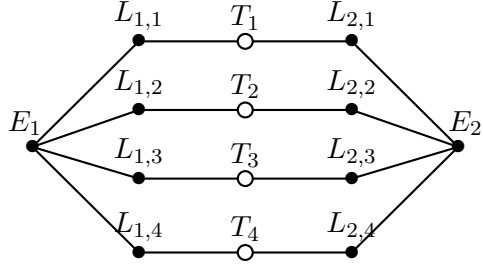
\end{lemma}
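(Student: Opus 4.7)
The plan is to combine the explicit Weierstrass description from Proposition~\ref{prop:twononred} with standard elliptic-surface machinery. I would first identify the fibre components, then the sections, and finally rule out any further negative curves.

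By Proposition~\ref{prop:twononred}, over $\bar{k}$ the fibration $\pi$ has exactly two singular fibres, each of Kodaira type $I_0^*$. Each such fibre $F_i$ consists of five $(-2)$-curves in the standard $I_0^*$ configuration: a central component $E_i$ of multiplicity $2$ meeting four disjoint outer components $L_{i,1},\dots,L_{i,4}$ transversally, each of multiplicity $1$. This yields the two ``stars'' of Figure~\ref{exceptionalcurves} and the decomposition $F_i = 2E_i + L_{i,1}+L_{i,2}+L_{i,3}+L_{i,4}$. Since $X_{\bar{k}}$ is rational, $\rank \NS(X_{\bar{k}}) = 10$; the Shioda--Tate formula then forces the generic Mordell--Weil rank to equal $10 - 2 - 4 - 4 = 0$, so every section is torsion. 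From the Weierstrass form $Y^2 = X^3 + ag(t)^2 X + bg(t)^3$ in Proposition~\ref{prop:twononred}(2), the substitution $X = g(t)u$ identifies the non-zero $2$-torsion sections with the three (distinct, by separability) roots of $u^3 + au + b$; together with the zero section at infinity this yields four $2$-torsion sections $T_1,\dots,T_4$, each a $(-1)$-curve.

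To see the four $T_j$ meet four distinct outer components of each fibre, I would analyse the minimal resolution of the Weierstrass model near a zero $t_0$ of $g(t)$. The three non-zero $2$-torsion sections all pass through the Weierstrass singular point $(X,Y,t) = (0,0,t_0)$, but with three distinct tangent directions (since $X/g(t) \to u_i$ for the three distinct roots $u_i$); hence they are separated by the first blowup and land on three distinct exceptional components after the full resolution of the $D_4$ surface singularity. The zero section, being at infinity, avoids this singular point entirely and lies on the remaining fourth outer component, namely the strict transform of the original cuspidal fibre. Relabelling the $L_{2,j}$ as necessary, I arrange $T_j\cdot L_{1,j} = T_j\cdot L_{2,j} = 1$; pairwise disjointness of the $T_j$ follows directly from the Weierstrass description, since distinct $T_j$ correspond to distinct smooth points on every smooth fibre.

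Finally, to see these fourteen curves exhaust the negative curves on $X_{\bar{k}}$, let $C \subset X_{\bar{k}}$ be any irreducible negative curve. Since $X$ is a rational elliptic surface, $-K_X$ is linearly equivalent to a fibre $F$, so the adjunction formula~\eqref{eqn:adjunction} gives $C^2 = 2p_a(C) - 2 + F\cdot C$. If $C$ is not fibral then $F\cdot C \geq 1$ combined with $C^2<0$ forces $p_a(C) = 0$ and $F\cdot C = 1$, making $C$ a section and hence one of the $T_j$; otherwise $C$ is an irreducible component of a reducible fibre and is therefore an $E_i$ or $L_{i,j}$. The main obstacle in this plan is justifying that the four sections meet four distinct outer components of each fibre; the tangent-direction argument above is the cleanest route, but one could alternatively invoke Shioda's height-pairing formula or Oguiso--Shioda's tables of Mordell--Weil lattices of rational elliptic surfaces.
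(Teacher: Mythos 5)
Your argument takes a genuinely more explicit route than the paper's. Where the paper quotes Persson's classification for the existence of exactly four $2$-torsion sections and Miranda--Persson for their disjointness, you read the sections off the Weierstrass model $Y^2=X^3+ag(t)^2X+bg(t)^3$ and follow them through the resolution of the two $D_4$ singularities. Most of this is sound: the substitution $X=g(t)u$ does exhibit the three non-trivial $2$-torsion sections; the tangent-direction analysis correctly places them on three distinct outer components of each $I_0^*$ fibre, with the zero section on the strict transform of the Weierstrass fibre; and your adjunction computation (using $-K_X\sim F$, so $C^2=2p_a(C)-2+F\cdot C$) correctly reduces the classification of negative curves to ``components of reducible fibres'' and ``sections'', which is exactly the paper's dichotomy. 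The payoff of your approach is that it is self-contained and does not lean on Persson's tables.

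There is, however, one genuine gap, in the final step ``$C$ is a section and hence one of the $T_j$''. You have shown that the Mordell--Weil group of the generic fibre over $\bar k(t)$ has rank $0$ (Shioda--Tate) and that its $2$-torsion subgroup is $(\ZZ/2\ZZ)^2$, but you have not excluded torsion sections of order greater than $2$; any such section would be a further $(-1)$-curve missing from Figure~\ref{exceptionalcurves}, so the statement ``these fourteen curves exhaust the negative curves'' is not yet proved. This is fillable by standard means: a non-zero torsion section must meet a non-identity component of each $I_0^*$ fibre (the smooth locus of the identity component is $\Ga$, hence torsion-free in characteristic $0$), and the induced map from the torsion subgroup to the product of the two component groups $(\ZZ/2\ZZ)^2\times(\ZZ/2\ZZ)^2$ is injective (e.g.\ by the height formula, or by disjointness of torsion sections), so all torsion is $2$-torsion. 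Alternatively, since the generic fibre is a non-trivial quadratic twist of a constant curve, its points over $\bar k(t)$ correspond to constant points $Q$ with $Q=-Q$, which again gives exactly the $2$-torsion; or one can simply cite the Oguiso--Shioda/Persson classification, as the paper does. You should also say a word on why each section is a $(-1)$-curve (adjunction plus $-K_X\cdot T=1$), which you assert but do not justify.
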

\begin{proof}
Let $C$ be a $(-n)$-curve. The adjunction formula \eqref{eqn:adjunction} gives $-K_C \cdot C = n - 2$. As $-K_X$ is nef (Remark \ref{rem:anticanonical}), we have $-K_X \cdot C \geq 0$, hence $n \geq 2$. We also find that $(-1)$-curves correspond to sections of the elliptic fibration, while $(-2)$-curves are components of singular fibres.  Since the N\'eron-Severi group of $X$ has rank 10 \cite[\S8.8]{ShiodaSchuett}, the Shioda--Tate formula \cite[Cor.~6.13]{ShiodaSchuett} implies that the generic rank is $0$, hence $X$ has only finitely many negative curves. For the surfaces we consider, there are four 2-torsion sections (see the bottom of p.~8 of the list in \cite{Persson}) and ten reducible components of singular fibres. To derive the intersection behaviour in Figure \ref{exceptionalcurves}, note that torsion sections are disjoint \cite[Lem.~1.1]{MirandaPersson}, and thus different 2-torsion sections intersect different components of the $I_0^*$ fibre in a smooth point (the connected component being $\Ga$, hence torsion free \cite[\S IV.9.4, Tab.~4.1]{Sil94}).
\end{proof}

\begin{lemma} \label{lem:Chatelet}
	The surface $X$ is birational to a Ch\^{a}telet surface.
	If $k$ is a number field then $X$ satisfies the Hilbert property.
\end{lemma}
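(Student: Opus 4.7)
My plan has two main steps, following the outline indicated in the introduction.

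\textbf{Step 1: Explicit birational equivalence to a Châtelet surface.} By Proposition~\ref{prop:twononred}, $X$ admits the affine Weierstrass model $g(t)y^2=f(x)$ with $f,g$ separable non-constant, $\deg f=3$, $\deg g\le 2$. Writing $g(t)=at^2+bt+c\in k[t]$ (allowing $a=0$ in the $\deg g=1$ case), I would perform the birational substitution
$(x,t,y)\mapsto(x,u,v):=(x,y,ty)$, whose inverse $(x,u,v)\mapsto(x,v/u,u)$ is defined on $\{u\ne 0\}$. A direct calculation gives
$$av^2+buv+cu^2=a(ty)^2+b(ty)y+cy^2=(at^2+bt+c)y^2=g(t)y^2,$$
so the original equation transforms to $Q(u,v)=f(x)$, where $Q$ is a binary quadratic form of discriminant $b^2-4ac=\disc(g)\ne 0$ by the separability of $g$. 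Completing the square over $k$ puts this into standard Châtelet form
$$u'^2-\disc(g)\,v'^2=P(x),$$
with $P\in k[x]$ a separable cubic. Taking a smooth projective model $Y$, we conclude that $X$ is birational to the Châtelet surface $Y$ in the sense of \cite{CTSSD}. The edge case $\deg g=1$ is handled identically and produces a split Châtelet surface ($Q$ factors over $k$).

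\textbf{Step 2: Hilbert property from \cite{CTSSD}.} Assume now $k$ is a number field. The main theorem of Colliot-Thélène, Sansuc and Swinnerton-Dyer \cite{CTSSD} shows that the Brauer--Manin obstruction is the only obstruction to weak approximation on the smooth projective Châtelet surface $Y$. In particular, $Y(k)$ is dense in the Brauer--Manin set, and hence $Y$ satisfies weak weak approximation (weak approximation holds outside a finite set of places). Because $Y$ is smooth, projective and geometrically rational, weak weak approximation implies the Hilbert property by the general principle connecting weak approximation to non-thinness (this is the same mechanism later employed independently in \cite{CT19}). Therefore $Y$ satisfies the Hilbert property.

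\textbf{Step 3: Transfer.} The Hilbert property is a birational invariant among smooth projective varieties over $k$: if $\phi:U\xrightarrow{\sim} V$ is an isomorphism between open subsets of smooth projective $X$ and $Y$, then the complements contribute only thin sets and the equivalence of thinness under $\phi$ on $U(k)\cong V(k)$ propagates the property from $Y$ to $X$. Hence $X$ inherits the Hilbert property from $Y$.

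\textbf{Main obstacle.} Step 1 is essentially a computation and Step 3 is folklore; the substance lies in Step 2, namely extracting the Hilbert property for $Y$ from the weak-approximation output of \cite{CTSSD}. The delicate point is invoking the correct general principle that ``weak weak approximation + geometric rationality $\Rightarrow$ Hilbert property'' with a precise reference, and verifying that the birational transfer in Step 3 is applied correctly. The $\deg g=1$ case is a minor technicality since the associated Châtelet surface is split, and the Hilbert property there reduces essentially to that of $\mathbb{P}^2$.
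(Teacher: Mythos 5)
Your proposal is correct and follows essentially the same route as the paper: an explicit change of variables turning $g(t)y^2=f(x)$ into a Ch\^atelet equation $w^2-ay^2=f(x)$, then \cite{CTSSD} for weak weak approximation and the standard implication (Serre, \emph{Topics in Galois theory}, Thm.~3.5.7) that weak weak approximation yields the Hilbert property, which is a birational invariant of smooth projective varieties. The only cosmetic difference is that the paper first normalises $g(t)=t^2-a$ before setting $w=yt$, whereas you diagonalise the resulting binary quadratic form afterwards.
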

\begin{proof}
	We make a change of variables to write $g(t) = t^2 -a$ 
	for some $a \in k$ and obtain
	$y^2t^2 - ay^2 = f(x).$
	The change of variables $w = yt$ then gives
	$$w^2 - ay^2 = f(x)$$
	which is  a Ch\^{a}telet surface.
	These satisfy so-called weak weak approximation by
	\cite[Thm.~B]{CTSSD}, hence satisfy the Hilbert property
	by \cite[Thm.~3.5.7]{Ser08}.
\end{proof}

By the previous lemma  $X$ in fact always has a conic bundle structure (this is given by projecting onto $x$ in the equation in Proposition \ref{prop:twononred}). We call this the \emph{Ch\^{a}telet bundle} on $X$.

\begin{proposition}  \label{prop:Chatelet}
	\hfill
	\begin{enumerate}
		\item Each conic in the Ch\^{a}telet bundle  meets the 
		non-reduced fibres $F_1$ and $F_2$ of $\pi$ over $\bar{k}$
		in the non-reduced components  $2E_1$ and $2E_2$, respectively.
		\item Assume that the $F_i$ are defined over $k$ 
		and that $\pi$ has non-trivial $2$-torsion.
		Then there is a conic bundle on $X$ whose smooth fibres
		meet the non-reduced fibres $F_i$ of $\pi$ in a 
		reduced component.
	\end{enumerate}	
\end{proposition}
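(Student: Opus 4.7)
For part (1), a smooth member $C$ of the Ch\^{a}telet bundle has the affine equation $g(t) y^2 = f(x_0)$ with $f(x_0) \neq 0$, so $C$ is a bisection of $\pi$ and hence $C \cdot F_i = 2$. Working over $\bar{k}$, the surface $X_{\bar{k}}$ can be described as the minimal resolution of $(E \times \PP^1)/\sigma$, where $E$ is the constant elliptic curve obtained by untwisting $g$ and $\sigma(P,t) = (-P, t^*)$ for an involution $t \mapsto t^*$ on $\PP^1$ whose fixed points lie under the two non-reduced fibres. Under this description, $C$ is the image of $\{P_0\} \times \PP^1$ for a non-$2$-torsion point $P_0 \in E$ with $x(P_0) = x_0$, so $C$ avoids the fixed locus of $\sigma$, whose resolution produces precisely the leaves $L_{i,j}$. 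Hence $C \cdot L_{i,j} = 0$ for all $i,j$, and combined with $F_i = 2E_i + \sum_j L_{i,j}$ this forces $C \cdot E_i = 1$.

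For part (2), the hypotheses ensure that both the identity section $T_0$ and a non-trivial $2$-torsion section $T$ are $k$-rational. By Lemma~\ref{lem:figure} they meet each $F_i$ at two distinct leaves $L_{i,T}$ and $L_{i,0}$; these leaves are $k$-rational as the unique leaves of $F_i$ meeting the $k$-rational sections $T$ and $T_0$, and the central components $E_i$ are $k$-rational as the unique components of multiplicity greater than one. The plan is to produce the desired conic bundle by contracting a chain of four curves in $F_1$ and then applying Lemma~\ref{lem:conic_bundle} on the resulting surface.

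Consider the chain $T - L_{1,T} - E_1 - L_{1,0} - T_0$ on $X$, which by Lemma~\ref{lem:figure} is a linear chain of rational curves meeting transversally at single points, with $T, T_0$ being $(-1)$-curves and $L_{1,T}, E_1, L_{1,0}$ being $(-2)$-curves. Contracting $T, L_{1,T}, E_1, L_{1,0}$ in order, each in turn becomes a $(-1)$-curve at the moment it is contracted, since its self-intersection rises by $1$ through meeting the previously contracted curve transversally; Castelnuovo's criterion thus produces a birational morphism $\sigma : X \to X'$ over $k$ to a smooth projective rational surface $X'$. The image $T'_0 := \sigma(T_0)$ is a smooth rational curve on $X'$ with $(T'_0)^2 = 0$ (since $T_0^2 = -1$ on $X$ and $T_0$ meets the chain once, at $L_{1,0}$), and by adjunction $-K_{X'} \cdot T'_0 = 2$. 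Since $X'$ is rational, $\HH^1(X', \OO_{X'})=0$, and Lemma~\ref{lem:conic_bundle} applied to $T'_0 \subset X'$ yields a conic bundle $X' \to \PP^1$ over $k$; composing with $\sigma$ gives a conic bundle on $X$ whose associated linear system is $|D|$ for $D = \sigma^* T'_0 = T + T_0 + L_{1,T} + L_{1,0} + E_1$.

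For a smooth fibre $D'$ of this new conic bundle, the linear equivalence $[D'] = [D]$ together with the intersection data of Figure~\ref{exceptionalcurves} gives $D' \cdot E_i = 0$, $D' \cdot L_{i,T} = D' \cdot L_{i,0} = 0$, and $D' \cdot L_{i,j} = 1$ for the remaining two leaves of each $F_i$, so $D'$ meets each non-reduced fibre $F_i$ only in reduced components, as required. The main step to verify carefully is the iterated contraction, namely that each curve in the chain has indeed become a $(-1)$-curve precisely when it is to be contracted; this however follows immediately from the linear chain structure together with the formula that self-intersection increases by one for each transverse meeting with a contracted curve.
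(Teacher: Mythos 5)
Your proof is correct, and both parts take a genuinely different route from the paper's. For (1), the paper blows down the four torsion sections $T_i$ to reach the Ch\^atelet surface, identifies the singular fibres of its conic bundle, and pulls back to see that the singular fibres of the Ch\^atelet bundle on $X$ are $L_{1,i}+2T_i+L_{2,i}$, whence $E_i$ is a section of it and each smooth conic meets $F_i$ only along $E_i$; your quotient description of $X_{\bar k}$ as the resolution of $(E\times\PP^1)/\sigma$ reaches the same conclusion by showing the conics miss the exceptional curves $L_{i,j}$, and is legitimate since the resolved quotient is relatively minimal (its only singular fibres are the two $I_0^*$'s) and relatively minimal models are unique. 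For (2), you end up with exactly the paper's divisor class $C=E_1+L_{1,1}+L_{1,2}+T_1+T_2$, but whereas the paper applies Lemma~\ref{lem:conic_bundle} directly to $C$ on $X$ and then must rule out a fixed component by hand (the eigenvalue analysis of the quadratic form \eqref{eqn:quadratic_form} --- the delicate point flagged in Remark~\ref{rem:fixed_component}), you first contract the chain $T,L_{1,T},E_1,L_{1,0}$ over $k$ and apply the lemma to the integral curve $T_0'$ with $(T_0')^2=0$ downstairs, so base-point-freeness comes for free from the lemma on $X'$ and pulls back. This is arguably cleaner, at the cost of having to verify the iterated Castelnuovo contractions and the $k$-rationality of each curve in the chain, which you do correctly ($E_1$ is the unique multiple component of $F_1$, and $L_{1,T}$, $L_{1,0}$ are the unique leaves meeting the $k$-rational sections $T$, $T_0$).

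One harmless bookkeeping slip at the end: for $i=2$ the intersection numbers are the other way around, namely $D'\cdot L_{2,T}=D'\cdot L_{2,0}=1$ and $D'$ misses the other two leaves of $F_2$ (the only curves in $D$ meeting $F_2$ are the sections $T$ and $T_0$, which pass through $L_{2,T}$ and $L_{2,0}$). This does not affect the conclusion, since what matters is only $D'\cdot E_i=0$ for $i=1,2$, which forces $D'$ to meet each $F_i$ in reduced components.
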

\begin{proof}
(1) We use the curves from Figure \ref{exceptionalcurves}. One obtains the Ch\^{a}telet surface by blowing down the $T_i$. The singular fibres of the conic bundle on the Ch\^{a}telet surface are  the images of  $L_{1,i} + L_{2,i}$, and the curves $E_i$ thus give sections of the conic bundle over $\bar{k}$. Pulling back, the singular fibres of the Ch\^{a}telet bundle on $X$ are given by $L_{1,i} + 2T_i + L_{2,i}$. We have $E_i \cdot (L_{1,i} + 2T_i + L_{2,i}) = 1$, thus each conic meets $F_i$ in  $E_i$, as claimed.

(2) 
Our assumptions imply that two  sections are defined over $k$, say $T_1,T_2$. That the $F_i$ are defined over $k$ implies that the $E_i$ are also defined over $k$. Consider the curve $C = E_1 + L_{1,1} + L_{1,2} + T_1 + T_2$, which is defined over $k$. One easily checks from Figure \ref{exceptionalcurves} that $C^2 = 0$ and $-K_X \cdot C = 2$, thus by \eqref{eqn:adjunction} and Lemma \ref{lem:conic_bundle} this moves in a pencil. We will prove that this yields the required conic bundle (we have to be careful here, cf.~Remark \ref{rem:fixed_component}).

We first claim that $|C|$ has no fixed component. Write $C = D + F$ where $D,F$ are effective divisors such that $|D|$ has no  fixed component. Then we can write
$$D = a_1 E_1 + a_2 L_{1,1} + a_3 L_{1,2} + a_4 T_1 + a_5 T_2, \quad a_i \in \{0,1\}.$$
From Figure \ref{exceptionalcurves} we have
\begin{equation} \label{eqn:quadratic_form}
	D^2 = -2a_1^2 -2a_2^2 - 2a_3^2 -a_4^2 - a_5^2 + 2(a_1a_2 + a_1a_3 + a_2a_4 + a_3a_5).
\end{equation}
Let $M$ be the real symmetric matrix underlying the quadratic form in \eqref{eqn:quadratic_form}. Then $M$ has $5$ distinct eigenvalues,
 of which $4$ are negative, and one is equal to $0$ with eigenvector $(1,1,1,1,1)$. We deduce that $D^2 \leq 0$ with equality if and only if $\mathbf{a} = \mathbf{0}$ or $\mathbf{1}$. The case $D^2 < 0$ cannot occur as $|D|$ is a pencil without fixed component. Moreover if $\mathbf{a} = \mathbf{0}$ then $|D|$ is not a pencil, so we must have $\mathbf{a} = \mathbf{1}$. This implies that $F=0$ hence $|C|$ has no fixed component, as claimed.

As $C^2 = 0$ and $|C|$ is a pencil without fixed components, we deduce that $|C|$ is base-point-free. Whence by generic smoothness, the generic member is geometrically integral smooth of arithmetic genus $0$, hence a conic, so $|C|$ induces a conic bundle. From Figure \ref{exceptionalcurves} we find that $C \cdot E_i = 0$, hence the smooth members of the conic bundle avoid the non-reduced fibres of $\pi$, as required.
\end{proof}

\section{Rank jump once}

We now begin the proof of our main results. Our first aim is to prove Theorem \ref{thm:non_reduced}.(1). To prepare for the proof of Theorem \ref{thm:conic} and also illustrate the additional difficulties for quadratic twist families, we prove the following general result.

\begin{theorem} \label{thm:once}
	Let $\pi: X \to \PP^1$ be a geometrically rational elliptic surface over a number field $k$ with generic rank $r$. Assume that $\pi$ admits a bisection of arithmetic genus $0$. Then the set
	$\{ t \in \PP^1(k) : \rank X_t(k) \geq r + 1\}$
	is not thin.
\end{theorem}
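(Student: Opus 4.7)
The plan is to use the bisection to construct a conic bundle on $X$, apply Theorem \ref{thm:pencil} to force rank jumps along every fiber of this bundle, and then vary over the whole pencil of conics so that the resulting union of rank-jump points covers a non-thin subset of $\PP^1(k)$.

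\textbf{Conic bundle and rank jumps along each conic.} Let $B$ denote the bisection. After reducing to a relatively minimal model, $-K_X$ is proportional to the class of a smooth fiber, so $-K_X \cdot B = 2$, and adjunction \eqref{eqn:adjunction} gives $B^2 = 0$. Since $X$ is geometrically rational we have $\HH^1(X,\OO_X) = 0$, so Lemma \ref{lem:conic_bundle} produces a conic bundle $\phi: X \to \PP^1$ whose smooth members $\{D_s\}_{s \in \PP^1}$ are bisections of $\pi$. Theorem \ref{thm:pencil} applied to $|B|$ shows that for all but finitely many $s$ the base change $X \times_\pi D_s \to D_s$ has generic rank $\geq r+1$; Silverman's theorem (Theorem \ref{thm:Silverman}) applied to $X \times_\pi D_s$ then gives $\rank X_{\pi(P)}(k) \geq r+1$ for all but finitely many $P \in D_s(k)$.

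\textbf{Varying over the pencil via Hilbert irreducibility.} Set $Z := \{t \in \PP^1(k) : \rank X_t(k) \geq r+1\}$. By Remark \ref{rem:P^1}, given finitely many covers $\pi_i : Y_i \to \PP^1$ of degree $\geq 2$ with the $Y_i$ smooth geometrically integral, one must exhibit $t \in Z \setminus \bigcup_i \pi_i(Y_i(k))$. The strategy is to find $s \in \PP^1(k)$ satisfying (i) $D_s$ smooth with $D_s(k) \neq \emptyset$, so $D_s \cong \PP^1_k$; (ii) $X \times_\pi D_s$ of generic rank $\geq r+1$ (automatic for cofinitely many $s$); and (iii) for each $i$, the fiber product $Y_i \times_{\PP^1} D_s$ is geometrically integral of degree $\geq 2$ over $D_s$, equivalently the quadratic extension $k(D_s)/k(\PP^1)$ is not contained in $k(Y_i)$. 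Given such $s$, Hilbert's irreducibility theorem on $D_s \cong \PP^1$ supplies $P \in D_s(k)$ avoiding the thin subset $\bigcup_i (Y_i \times_{\PP^1} D_s)(k)$ and the finite bad locus from (ii), so that $t = \pi(P) \in Z \setminus \bigcup_i \pi_i(Y_i(k))$.

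\textbf{The dichotomy and main obstacle.} Condition (iii) is governed by the branch curve $R \subset \PP^1 \times \PP^1$ of the degree-$2$ map $(\phi,\pi): X \to \PP^1 \times \PP^1$: if $R$ has a non-horizontal component, the branch loci of the degree-$2$ maps $D_s \to \PP^1$ move with $s$, yielding infinitely many isomorphism classes of $k(D_s)/k(\PP^1)$, and (iii) holds for cofinitely many $s$. By Proposition \ref{prop:twononred}, the branch loci fail to move precisely when $X$ is a quadratic twist of a constant elliptic curve and $\phi$ is the Châtelet bundle, in which case the shared branch locus is the pair of non-reduced fibers of $\pi$ (Proposition \ref{prop:Chatelet}(1)). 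Outside this case the argument closes, with (i) supplied by smooth conics in the pencil containing a section of $\pi$. The hard case is the Châtelet one: every $D_s$ defines the same quadratic extension of $k(\PP^1)$, so (iii) may simply fail, and the branch-locus strategy collapses. Here one must instead invoke Lemma \ref{lem:Chatelet}, which exhibits $X$ as a Châtelet surface with the Hilbert property, and run the Hilbert-irreducibility argument directly on $X$ in place of the family $\{D_s\}$: for the thin subset of $X(k)$ consisting of the images of $(X \times_{\PP^1} Y_i)(k)$, the Hilbert property of $X$ produces a rational point $P \in X(k)$ outside it, and Theorem \ref{thm:pencil} applied to the conic $D_{\phi(P)}$ forces $\pi(P) \in Z$. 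This Châtelet case is the principal obstacle and is exactly what motivates the detailed analysis of \S\ref{sec:Chatelet}.
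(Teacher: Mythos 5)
Your construction of the conic bundle, the reduction via Remark \ref{rem:P^1}, and your treatment of the case where the branch loci of the conics move all match the paper's argument. The trouble is the quadratic-twist (Ch\^{a}telet) case, where two things go wrong. First, the conics of the Ch\^{a}telet bundle do \emph{not} all define the same quadratic extension of $k(\PP^1)$: the conic $x=x_0$ gives $k(t)(\sqrt{f(x_0)g(t)})$, so these extensions all share the branch locus $\{g=0\}$ but differ by the square class of $f(x_0)$. The whole content of the paper's Proposition \ref{prop:quadratic} in this case is that infinitely many square classes --- hence infinitely many isomorphism classes of quadratic extensions --- are realised by conics \emph{with a rational point}. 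This is exactly where the Hilbert property of $X$ (Lemma \ref{lem:Chatelet}) is used: if only finitely many classes $d$ occurred, then $X(k)$ would be covered by the finitely many sets $\{f(x)=dw^2\}$, each of which is the image of $(X\times_{\PP^1}Y_d)(k)$ for the curve $Y_d: f(x)=dw^2$ base-changed along $\psi$, hence thin --- a contradiction. Once this is known, your condition (iii) \emph{can} be satisfied in the Ch\^{a}telet case, and the uniform endgame (Hilbert irreducibility on a fixed good conic $D\cong\PP^1$) goes through.

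Second, your substitute argument --- applying the Hilbert property directly on $X$ to produce one point $P\in X(k)$ avoiding the images of the $(X\times_{\pi}Y_i)(k)$, and then citing Theorem \ref{thm:pencil} for the conic $D_{\phi(P)}$ --- does not close. Theorem \ref{thm:pencil} only controls the \emph{generic} rank of $X\times_\pi D_{\phi(P)}\to D_{\phi(P)}$; to say anything about the particular fibre $X_{\pi(P)}$ you must still invoke Silverman's theorem (Theorem \ref{thm:Silverman}) on that base-changed surface, and $P$ may be one of its finitely many exceptional points. The exceptional set is finite on each conic, but it varies with the conic, and the union over all conics is an uncontrolled subset of $X(k)$ which you have no reason to believe is thin; a single application of the Hilbert property of $X$ therefore cannot be guaranteed to avoid it. This is precisely why the paper confines the use of the Hilbert property of $X$ to the proof of Proposition \ref{prop:quadratic}, and performs the final point-selection on one fixed conic $D\cong\PP^1$, where the Silverman-exceptional set is finite and can be discarded alongside the thin sets coming from the $D\times_{\PP^1}Y_i$.
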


We note that the surface $X$ in Theorem \ref{thm:non_reduced} satisfies the assumptions of Theorem \ref{thm:once}, as there is always a family of bisections given by the Ch\^{a}telet bundle (explicitly, take the curve $x=x_0$ for some $x_0 \in k$ with $f(x_0) \neq 0$).

\subsection{First steps}
To prove Theorem \ref{thm:once} we may assume that $\pi:X \to \PP^1$ is relatively minimal. Let $C \subset X$ be the bisection of arithmetic genus $0$. By Remark \ref{rem:relatively_minimal}, we see that $\pi$ is still relatively minimal over the algebraic closure. In particular, the elliptic fibration is given by the anticanonical class $-K_X$ (Remark \ref{rem:anticanonical}), thus $C \cdot (-K_X) = 2$. Hence by Lemma~\ref{lem:conic_bundle}, we find that $L:=|C|$ is a base-point-free pencil which gives rise to a conic bundle $\psi:X \to \PP^1$.

\begin{lemma} \label{lem:unirational}
	The surface $X$ is unirational, hence
	$X(k)$ is Zariski-dense.
\end{lemma}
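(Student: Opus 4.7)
The plan is to construct a dominant rational map $\PP^1 \times \PP^1 \dashrightarrow X$ over $k$, thereby establishing unirationality. Zariski density of $X(k)$ will then follow, since $(\PP^1 \times \PP^1)(k)$ is Zariski dense over a number field $k$ and the image of a Zariski-dense set under a dominant rational map is Zariski-dense in the target.

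The geometric input is the section of $\pi$, which we use to turn the conic bundle $\psi$ into a $\PP^1$-bundle after a base change. Let $s : \PP^1 \to X$ be a section of $\pi$, set $S := s(\PP^1)$ (a smooth rational curve on $X$), and consider $\sigma := \psi \circ s : \PP^1 \to \PP^1$. In the main case where $\sigma$ is non-constant, form the fibre product
\[
Y := X \times_{\PP^1,\, \sigma} \PP^1.
\]
The second projection $Y \to \PP^1$ is a conic bundle (as a base change of $\psi$) and carries the tautological section $t \mapsto (s(t), t)$. A conic bundle over $\PP^1$ with a section has generic fibre $\PP^1_{k(t)}$, so $Y$ is $k$-birational to $\PP^1 \times \PP^1$. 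The other projection $Y \to X$ is dominant because $\sigma$ is surjective, and composing yields the required dominant rational map $\PP^1 \times \PP^1 \dashrightarrow X$.

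The main obstacle is ruling out, or otherwise handling, the degenerate case in which $\sigma$ is constant (so $S$ lies entirely inside one fibre of $\psi$). An intersection-theoretic analysis using $S^2 = -1$, $C^2 = 0$, $S \cdot F = 1$ and $C \cdot F = 2$ (where $F$ is a general $\pi$-fibre) shows that such a fibre must be reducible of the form $S + D$ with $D$ itself a section of $\pi$ meeting $S$ in a single Mordell--Weil torsion point; thus $S$ and $D$ fill the two components of a reducible conic fibre of $\psi$. If the Mordell--Weil group of $\pi$ has at least three elements, some section lies outside this fibre and the base-change argument goes through. The delicate remaining pathology is Mordell--Weil of order two with both sections confined to the same reducible $\psi$-fibre; here one falls back on an argument tailored to the surface, in particular on the Ch\^{a}telet birational model provided by Lemma~\ref{lem:Chatelet} for the quadratic twist surfaces of Theorem~\ref{thm:non_reduced}, or on the general unirationality of geometrically rational conic bundles possessing a $k$-rational point.
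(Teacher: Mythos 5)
Your main case ($\sigma$ non-constant, i.e.\ $C\cdot E>0$ for the section $E$) is precisely the paper's second case: base-changing the conic bundle $\psi$ along the rational multisection $E$ gives a conic bundle with a section, hence a rational surface dominating $X$. The problem is your treatment of the degenerate case $C\cdot E=0$, where the section lies inside a fibre of $\psi$. Two of the claims there are shaky, and the final fallback is a genuine gap. First, the structure of the reducible $\psi$-fibre containing $S$ need not be $S+D$ with $D$ a section: since $-K_X\cdot C=2$ and $-K_X\cdot S=1$, the residual part has anticanonical degree $1$, but it may also contain $(-2)$-curve components (components of singular fibres of $\pi$, which have anticanonical degree $0$), so it need not be irreducible, let alone a section. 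Second, even granting that analysis, a section lying \emph{outside this particular fibre} could still be contained in \emph{another} reducible fibre of $\psi$, so ``Mordell--Weil of order at least three'' does not by itself produce a section on which $\sigma$ is non-constant.

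The decisive gap is the last sentence. ``The general unirationality of geometrically rational conic bundles possessing a $k$-rational point'' is not a theorem: it is an open problem, known only for conic bundle surfaces with $K^2\ge 1$ (the case $K^2=1$ being exactly the content of Koll\'ar--Mella \cite{KM}). Here $K_X^2=0$, so you cannot invoke such a statement for $X$ itself; and the Ch\^atelet model of Lemma~\ref{lem:Chatelet} only exists for the quadratic twist surfaces, not for a general $X$ in Theorem~\ref{thm:once}. The paper's resolution of this case is the step you are missing: when $E$ is a component of a $\psi$-fibre it is a $(-1)$-curve in that fibre, so contracting it yields a conic bundle surface $X'$ with $K_{X'}^2=1$ and $X'(k)\neq\emptyset$, to which \cite[Thm.~7]{KM} applies; unirationality of $X'$ then gives unirationality of $X$.
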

\begin{proof}
	By Remark \ref{rem:anticanonical} we have $K_X^2 = 0$.
	Let $E \subset X$ be the zero section of $\pi$.

	As $C$ is nef we have $C \cdot E \geq 0$. 	
	First suppose
	that $C \cdot E = 0$. In this case $E$ is a $(-1)$-curve and a component
	of a fibre of the conic bundle. Contracting $E$, we obtain a conic bundle 
	surface $X'$ with $K_{X'}^2 = 1$ and $X'(k) \neq \emptyset$.
	But $X'$ is unirational by \cite[Thm.~7]{KM},
	hence $X$ is unirational.

	Now suppose that $C \cdot E > 0$. Then $E$ is a \emph{rational}
	multisection of the conic bundle $\psi$.
	Then $X \times_{\psi} E \to E$ is a conic bundle surface with
	a section, hence is rational (cf.~\cite[Lem.~23]{KM}).
	As this dominates $X$, we deduce $X$ is unirational.
\end{proof}

\subsection{Quadratic extensions}

If a conic $D \in L$ is integral, then the restriction $\pi|_D : D \to \PP^1$ is a degree $2$ cover; thus $k(\PP^1) \subset k(D)$ is a quadratic extension
which ramifies in exactly  $2$ points. The key result we require is the following.

\begin{proposition} \label{prop:quadratic}
	The set
	$$\{ k(\PP^1) \subset k(D) : D \in L, D(k) \neq \emptyset\}$$
	contains infinitely many isomorphism classes of quadratic extensions.
\end{proposition}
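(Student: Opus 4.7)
The approach is to study the morphism $\Phi = (\psi, \pi) : X \to \PP^1 \times \PP^1$, which is generically of degree $2$ (since $D \cdot F = 2$ for $D \in L$ and $F$ a general fibre of $\pi$), together with its branch curve $B \subset \PP^1 \times \PP^1$. For each smooth $D_s = \psi^{-1}(s)$, the branch divisor of $\pi|_{D_s} : D_s \to \PP^1$ is the slice $B \cap (\{s\} \times \PP^1)$, of degree $2$ by Riemann--Hurwitz. Since two quadratic extensions of $k(\PP^1)$ with distinct branch divisors cannot be isomorphic, while those sharing a branch divisor form a $k^*/k^{*2}$-torsor, the proof reduces to exhibiting either infinitely many branch divisors or, when these are rigid, infinitely many classes in $k^*/k^{*2}$ among the $s$ with $D_s(k) \neq \emptyset$.

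The first step is to identify when $B$ has a horizontal component $\PP^1 \times \{t_0\}$: using the base-point-freeness of $L$ (which rules out all $D_s$ sharing a singular point of $F_{t_0}$) and the identity $C\cdot F_{t_0} = 2 (C \cdot E) + \sum C \cdot L_j = 2$, this can only occur when $F_{t_0}$ is a non-reduced fibre of $\pi$ and its non-reduced component $E$ satisfies $C \cdot E = 1$, forcing $D_s \cap F_{t_0}$ to be one doubled point for every $s$. By Proposition \ref{prop:twononred} this happens for at most two values of $t_0$, with equality only in the Châtelet case. Hence in the \textbf{non-Châtelet case}, $B$ has at most one horizontal component, so the residual curve $B'$ has positive bidegree in the second factor, and the morphism $\PP^1_s \to \mathrm{Sym}^2 \PP^1_t$ sending $s$ to the branch divisor of $\pi|_{D_s}$ is non-constant. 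Zariski density of $X(k)$ (Lemma \ref{lem:unirational}) implies that $\psi(X(k))$ is Zariski dense in $\PP^1$, so applying this non-constant morphism yields infinitely many distinct branch divisors among $D_s$ with rational points.

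The main obstacle is the \textbf{Châtelet case}, where both horizontal components of $B$ appear and the pencil $L$ is forced to be the Châtelet bundle $(x,y,t) \mapsto x$ in the Weierstrass model $g(t) y^2 = f(x)$. Here each extension attached to $D_{x_0} = \{x = x_0\}$ has the form $k(t)[\sqrt{c \cdot g(t)}]$ with $c \equiv f(x_0) \in k^*/k^{*2}$, so it suffices to show that the map $X(k) \to k^*/k^{*2}$, $P \mapsto f(x(P))$ has infinite image. Suppose for contradiction that the image is a finite set $\{c_1, \dots, c_n\}$; then $X(k)$ is covered by the images of the degree-$2$ covers $W_i := X \times_{\PP^1_t} Y_i \to X$, where $Y_i$ is the smooth conic $c_i z^2 = g(t)$. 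A short computation in $k(X) = k(x,t)[\sqrt{f(x)g(t)}]$ --- exploiting that $f$ is a non-square in $k(x)$ and $g$ is a non-square in $k(t)$ --- shows that $\sqrt{g(t)/c_i} \notin k(X)$, so each $W_i$ is geometrically integral. This would make $X(k)$ thin, contradicting the Hilbert property granted by Lemma \ref{lem:Chatelet}. The verification of irreducibility of $W_i$ is the key technical step, but it reduces to an explicit quadratic calculation once the Châtelet model is in place.
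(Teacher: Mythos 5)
Your proposal is correct and follows essentially the same route as the paper: in the case where the branch points move you conclude via non-isomorphy of extensions with distinct branch divisors plus Zariski density of $X(k)$, and in the rigid (Ch\^atelet) case you derive a contradiction with the Hilbert property of $X$ (Lemma \ref{lem:Chatelet}) by showing that finiteness of $f(x(P))$ in $k^*/k^{*2}$ would cover $X(k)$ by images of geometrically integral double covers. The only cosmetic differences are that you package the moving-branch-point case via the branch curve of $(\psi,\pi):X\to\PP^1\times\PP^1$ instead of Lemma \ref{lem:ramification}, and form the fibre product over the $t$-line rather than over the conic bundle $\psi$; also, your assertion that $L$ is \emph{forced} to be the Ch\^atelet bundle is not needed (and not obviously true) --- like the paper, you may simply choose to work with the Ch\^atelet bundle in that case.
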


To prove this, we require the following on the ramification of $k(\PP^1) \subset k(D)$.

\begin{lemma} \label{lem:ramification}
	Let $P \in \PP^1$ lie below a reduced fibre $F$ of $\pi$.
	Then there are only finitely many
	$D \in L$ such that $k(\PP^1) \subset k(D)$ ramifies at $P$.
\end{lemma}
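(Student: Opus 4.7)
The plan is to rephrase the ramification condition geometrically and then reduce to Lemma \ref{lem:strange}. Recall that $L = |C|$ induces a conic bundle $\psi: X \to \PP^1$ (by Lemma \ref{lem:conic_bundle}), so only finitely many members of $L$ are non-integral, and for these the notion of the quadratic extension $k(\PP^1)\subset k(D)$ is not even defined; thus we may restrict to integral $D \in L$, which are bisections. For such $D$, the restriction $\pi|_D : D \to \PP^1$ is a finite separable degree-$2$ morphism, and the extension $k(\PP^1) \subset k(D)$ ramifies at $P$ if and only if $\pi|_D$ is ramified above $P$.

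The key observation is that the scheme-theoretic fibre $\pi|_D^{-1}(P)$ is equal to the intersection $D \cap F$ computed inside $X$, because $F = \pi^{-1}(P)$ as a scheme. Since $D$ is a bisection this length-$2$ scheme is either reduced (two distinct geometric points, whence unramified) or non-reduced and supported at a single point (whence ramified at $P$). So ramification of the extension at $P$ is equivalent to non-reducedness of $D \cap F$.

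To finish, I would apply Lemma \ref{lem:strange} directly: $L$ is a base-point-free pencil, $F$ is by hypothesis a reduced curve on $X$, and we are in characteristic zero, so all but finitely many $D \in L$ meet $F$ in a reduced scheme. Combining this finite exceptional set with the finitely many non-integral members of $L$ yields the claim. I do not anticipate any real obstacle: the only point that requires a modest amount of care is the scheme-theoretic identification of the fibre of $\pi|_D$ with the intersection $D \cap F$ in $X$, after which the conclusion is essentially immediate from Lemma \ref{lem:strange}.
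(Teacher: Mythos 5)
Your proposal is correct and follows essentially the same route as the paper: identify ramification of $k(\PP^1) \subset k(D)$ at $P$ with non-reducedness of the scheme-theoretic intersection $D \cap F$, then apply Lemma \ref{lem:strange}. The extra care you take with non-integral members of $L$ and the identification of the fibre of $\pi|_D$ with $D \cap F$ is sound but left implicit in the paper.
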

\begin{proof}
	We have that 
	$k(D)$ ramifies over $P$ if and only if $F \cap D$ is non-reduced.
	However by Lemma \ref{lem:strange} this happens for only finitely many
	$D$, as required.
\end{proof}

\begin{proof}[Proof of Proposition \ref{prop:quadratic}]
We divide the proof into two cases, depending on the number of reduced fibres of $\pi$.

\emph{At most one non-reduced fibre:}
Lemma \ref{lem:ramification} shows that at most one branch point of $D$ can be fixed and the other point must move in general. But quadratic extensions of $k(\PP^1)$ with different branch loci are non-isomorphic, hence given a quadratic extension $k(\PP^1) \subset K$, there are only finitely many $D$ with $k(D) \cong K$. Moreover by Lemma \ref{lem:unirational}, there are infinitely many $D$ with $D(k) \neq \emptyset$. Proposition \ref{prop:quadratic} now easily follows in this case.

\emph{Two non-reduced fibres:}
This is the more delicate case.
By Proposition \ref{prop:twononred} our surface has an equation of the shape
$$g(t)y^2 = f(x)$$
where $\deg f = 3$ and $\deg g = 2$ are separable. The elliptic fibration  $\pi$ is given by projecting to $t$. We will use the Ch\^{a}telet conic bundle $\psi$ given by projecting to $x$ (see \S\ref{sec:Chatelet}), since there is no reason in general to expect the existence of another conic bundle.
The quadratic extensions for $x \in k$ with $f(x) \neq 0$ are $$k(t) \subset k(t)(\sqrt{f(x)g(t)}).$$
This ramifies exactly over the roots of $g(t)$; this is independent of $x$ so the conclusion of Lemma \ref{lem:ramification} does not hold in this case, hence we must take a different approach. 
Assume that the set in Proposition \ref{prop:quadratic} is finite. Then as $(x,y,t)$ varies over the rational points of $X$, it follows that $f(x)$ takes only finitely many values modulo squares. Therefore, for $d \in k^*$, it suffices to show that the set 
\begin{equation}  \label{eqn:squares}
	\{ (x,y,t) \in X(k): f(x)  = d w^2, \mbox{ for some } w \in k  \}
\end{equation}
is thin, since this contradicts that $X$ satisfies 
the Hilbert property (Lemma~\ref{lem:Chatelet}).
But consider the geometrically integral curve $Y$ with the map
$$Y: f(x)= dw^2 \subset \mathbb{A}^2, \quad Y \to \PP^1, \, (x,w) \mapsto x.$$
Then we have the base-change with respect to the conic bundle $\psi:X \to \PP^1$
$$Z:=X \times_{\PP^1} Y \to X.$$
The image of $Z(k) \to X(k)$ is exactly the set \eqref{eqn:squares}. As the generic fibre of $X \to \PP^1$ is geometrically irreducible, it easily follows that $Z$ is again irreducible. Hence it follows from the definition of thin sets that \eqref{eqn:squares} is thin, as required.
\end{proof}

\subsection{Completion of the proof} \label{sec:complete}
We now prove Theorem \ref{thm:once}. Let $Y_i \to \PP^1$ be a finite collection of finite morphisms of degree at least $2$ with the $Y_i$ geometrically integral. 
By Theorem \ref{thm:pencil}, all but finitely many of the conics $D \in L$ are bisections that increase the generic rank after base-change.
The field extensions $k(\PP^1) \subset k(Y_i)$ have only finitely many quadratic subfields. Proposition \ref{prop:quadratic} therefore implies that there are infinitely many $D \in L$ such that
\begin{enumerate}
	\item $D(k) \neq \emptyset$;
	\item $k(D)$ is not a subfield of any $k(Y_i)$;
	\item The generic fibre of 	$X \times_{\pi} D \to D$
	has Mordell--Weil rank at least $r+1$.
\end{enumerate}
Choose such a $D$. As quadratic extensions are Galois, it follows from (2) that $k(D)$ is linearly disjoint with each $k(Y_i)$, so the curves $D \times_{\PP^1} Y_i$ are integral. By (1) we have $D \cong \PP^1$, so $D$ satisfies the Hilbert property. Thus there are infinitely many   $x \in D(k)$ not in the union of the images  of the  $(D\times_{\PP^1} Y_i)(k) \rightarrow D(k)$. Then the point $\pi(x) \in \PP^1(k)$ does not lie in the image of any $Y_i(k) \to \PP^1(k)$. However by (3) and Theorem \ref{thm:Silverman} applied to $X \times_{\pi} D \to D$, the  rank of $\pi^{-1}(x)$  is at least  $r+1$ for infinitely many $x$. This shows that the set in Theorem \ref{thm:once} is not contained in any thin subset, as required (cf.~Remark~\ref{rem:P^1}). \qed

\begin{remark}
	In the case of two non-reduced fibres, the elliptic surface becomes
	constant after base changing to $D$. Nonetheless $D \cong \PP^1$,
	so Silverman's theorem (Theorem \ref{thm:Silverman}) still applies
	in this case.
\end{remark}

\section{Rank jump twice}
In this section we give a uniform proof of Theorem \ref{thm:conic} and part (2) of Theorem~\ref{thm:non_reduced}, by building on the proof of Theorem \ref{thm:once}. 

\subsection{Biquadratic extensions}
Let $\pi: X \to \PP^1$ be a geometrically rational relatively minimal elliptic surface over a number field $k$ of generic rank $r$ with a bisection of arithmetic genus $0$. As in the proof of Theorem \ref{thm:once}, this induces a conic bundle structure $\psi:X \to \PP^1$. If $X$ is as in Theorem~\ref{thm:non_reduced}.(2), then by Proposition \ref{prop:Chatelet} it admits a conic bundle which is not the Ch\^{a}telet bundle; we choose $\psi$ to be such a bundle satisfying Proposition \ref{prop:Chatelet}.(2).

Let $Y_i \to \PP^1$ be a finite collection for $i \in I$ of finite morphisms of degree at least $2$ with the $Y_i$ smooth geometrically integral curves (cf.~Remark~\ref{rem:P^1}). We next  view the fields $k(Y_i)$  and the function fields of the conics as extensions of $k(\PP^1)$ and compare their branch points.

\begin{proposition} \label{prop:2}
	Let $\mathcal{P}=\{P_1,\dots,P_r\}$ for some $P_j \in \PP^1_k$.
	There exist infinitely many 
	$D_1$ and infinitely many $D_2$ which are fibres of the conic bundle
	$\psi$, such that:
	\begin{enumerate}
	\item $D:=D_1 \times_{\pi} D_2$ is a geometrically integral curve with a Zariski-dense set of rational points;
	\item $k(D_1) \otimes_{k(\PP^1)} k(D_2)$ 
	is linearly disjoint with every $k(Y_i)$;
	\item The generic fibre of 	$X \times_{\pi} D \to D$
	has Mordell--Weil rank at least $r+2$.
	\item Either 
	\begin{enumerate}
		\item $g(D) = 0$ and $D$ ramifies above at most one of the $P_j$, or
		\item $g(D)=1$ and $D$ does not ramify above any of the $P_j$.
	\end{enumerate}
\end{enumerate}
\end{proposition}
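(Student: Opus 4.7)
The plan is to build the pair $(D_1, D_2)$ in stages, extending the single-conic argument from the proof of Theorem~\ref{thm:once} by applying the conic bundle $\psi$ twice.

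\textbf{Rank jump via iterated base change.} By Theorem~\ref{thm:pencil} applied to $\pi: X \to \PP^1$ and $L$, for all but finitely many $D_1 \in L$ the base change $X \times_\pi D_1 \to D_1$ has generic rank $\geq r+1$. I would fix such a $D_1 \in L$ with $D_1(k) \neq \emptyset$, so that $D_1 \cong \PP^1$. The pullback of $L$ to $X' := X \times_\pi D_1$ is a pencil on the new elliptic surface $X' \to D_1$ whose generic element $D_1 \times_\pi D$ is a bisection. A second application of Theorem~\ref{thm:pencil}, valid since $g(D_1) = 0$, yields for all but finitely many $D_2 \in L$ that the iterated base change $X \times_\pi D \to D$ (with $D = D_1 \times_\pi D_2$) has generic rank $\geq r+2$, giving condition~(3).

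\textbf{Field avoidance and geometric integrality.} Proposition~\ref{prop:quadratic} supplies infinitely many $D_2 \in L$ with $D_2(k) \neq \emptyset$ and pairwise non-isomorphic extensions $k(D_2)/k(\PP^1)$. Excluding the finitely many $D_2$ with $k(D_2) \cong k(D_1)$ forces $k(D) = k(D_1) \otimes_{k(\PP^1)} k(D_2)$ to be a genuine biquadratic extension, so $D$ is geometrically integral. Excluding the further finite set of $D_2$ for which any of the three quadratic subfields $k(D_1)$, $k(D_2)$, $k(\PP^1)(\sqrt{g_1 g_2})$ of $k(D)$ appears among quadratic subextensions of some $k(Y_i)$ gives condition~(2).

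\textbf{Branch loci and genus.} A Riemann--Hurwitz computation for biquadratic covers of $\PP^1$, using that each $D_i \to \PP^1$ has a branch locus $B_i$ of size $2$, shows that $g(D) = 0$ iff $|B_1 \cap B_2| = 1$ and $g(D) = 1$ iff $B_1 \cap B_2 = \emptyset$ (the case $|B_1 \cap B_2| = 2$ is excluded by geometric integrality). By Lemma~\ref{lem:ramification}, any fixed point of $\PP^1$ below a reduced fibre lies in $B_D$ for only finitely many $D \in L$. In the two-non-reduced-fibres case, taking $\psi$ as in Proposition~\ref{prop:Chatelet}.(2) so that no $B_D$ meets the non-reduced loci, one arranges $B_1 \cap B_2 = \emptyset$ and $(B_1 \cup B_2) \cap \mathcal{P} = \emptyset$, landing in case~(4b). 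In the general case (at most one non-reduced fibre), one instead forces $B_1 \cap B_2$ to be the unique point below the non-reduced fibre (if any), placing us in case~(4a).

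\textbf{Zariski-density---the main obstacle.} Condition~(1) is the delicate point. For $g(D) = 0$ (case~(4a)), a single rational point of $D$ suffices: choosing the common branch point $P_0 \in B_1 \cap B_2$ to be $k$-rational makes the ramification points of $D_i \to \PP^1$ above $P_0$ both $k$-rational, and (after normalizing the resulting node) these give $k$-rational points of $D$, so $D \cong \PP^1$. For $g(D) = 1$ (case~(4b)), one needs positive Mordell--Weil rank on the elliptic curve $D$. A rational point of $D$ is obtained by taking $D_i := \psi^{-1}(\psi(p_i))$ for a pair $(p_1, p_2) \in X_t(k)^2$ with $p_1 \neq p_2$, which exists in abundance because Theorem~\ref{thm:once} provides infinitely many $t$ with $\rank X_t(k) \geq r+1 \geq 1$. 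Upgrading to positive rank is the step I expect to require the most care: here I would exploit the Zariski density of $X(k)$ (Lemma~\ref{lem:unirational}), and hence of $(X \times_\pi X)(k)$, to argue that for suitably chosen $(D_1, D_2)$ infinitely many distinct $t \in \PP^1(k)$ lift to rational points of $D$, so $D(k)$ is infinite and thus Zariski-dense.
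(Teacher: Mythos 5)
Your overall architecture --- iterate the base change via Theorem \ref{thm:pencil}, control the branch loci with Lemmas \ref{lem:strange} and \ref{lem:ramification} (switching to the conic bundle of Proposition \ref{prop:Chatelet}.(2) when there are two non-reduced fibres), and read off $g(D)$ from the size of $B_1\cap B_2$ --- is the same as the paper's, and conditions (2), (3) and (4) are handled essentially correctly. The genuine gap is in condition (1) in the genus-$1$ case, precisely where you say you ``expect the most care''. Zariski-density of $(X\times_\pi X)(k)$ does not give infinitely many rational points on any individual curve $D=D_1\times_\pi D_2$: these curves form a \emph{two}-parameter family of genus-$1$ curves inside the threefold $X\times_\pi X$, and dense rational points on the total space are perfectly compatible with every member of such a family having finite (even rank-$0$) Mordell--Weil group. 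Equivalently, the image of $D(k)$ in $\PP^1(k)$ is the intersection of the images of $D_1(k)$ and $D_2(k)$, i.e.\ the intersection of two thin sets of type $2$, and nothing in your argument prevents this intersection from being finite.

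The paper closes this gap by an asymmetric construction: first fix $D_1$ so that $X_{D_1}:=X\times_\pi D_1\to D_1$ has generic rank at least $r+1\geq 1$, whence $X_{D_1}(k)$ is Zariski-dense; then observe that the curves $D=D_1\times_\pi D_2$ are exactly the fibres of the genus-$1$ fibration $\psi'\colon X_{D_1}\to X\xrightarrow{\psi}\PP^1$ on this \emph{surface}, and invoke \cite[Lem.~3.2]{Demeio}, which guarantees that a genus-$1$ fibration on a surface with Zariski-dense rational points has infinitely many fibres with infinitely many rational points. That lemma (or a substitute for it) is the missing ingredient; it is a non-trivial arithmetic input that does not follow from density alone. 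A secondary, fixable issue: when $\pi$ has exactly one non-reduced fibre but the general conic of $\psi$ does \emph{not} ramify over the point $P_1$ beneath it, you cannot force $B_1\cap B_2=\{P_1\}$ for infinitely many $D_1$ and infinitely many $D_2$ (only finitely many conics ramify there, by the argument of Lemma \ref{lem:ramification}); in that situation you must fall back to the disjoint-branch-loci case (4b), as the paper does, reserving case (4a) for the situation where \emph{every} smooth conic is forced to ramify over $P_1$.
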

\begin{proof}
To prove the result, we are free to increase $\mathcal{P}$. We therefore assume that $\mathcal{P}$ contains all the branch points of the $Y_i$ and the singular locus of $\pi$.
We first assume that every fibre of $\pi$ is reduced. Then we can vary the ramification of $D_1$ using Lemma \ref{lem:ramification}. As in \S\ref{sec:complete}, we deduce that there are infinitely many $D_1$ with $D_1(k) \neq \emptyset$ such that $D_1$ is not ramified above any of the $P_j$, and	$X_{D_1}:=X \times_{\pi} D_1 \to D_1$ is an elliptic surface with generic rank at least $r+1$. We fix such a $D_1$ and next  construct $D_2$. 
Note that $X_{D_1}(k)$ is Zariski-dense as $X_{D_1}$ has an elliptic fibration of positive rank.

Composing with $\psi$ gives a map $\psi': X_{D_1} \to X \xrightarrow{\psi} \PP^1$ whose fibres are the curves $D_1 \times_{\pi} D_2$ as $D_2$ runs over the fibres of $\psi$. The generic fibre of $\psi'$ is a smooth curve of genus $1$, as only finitely many $D_2$ share a branch point with $D_1$ by Lemma \ref{lem:ramification}. These $D = D_1 \times_{\pi} D_2$ are a family of bisections of the elliptic fibration $X_{D_1} \to D_1$ induced by $\pi$, hence by Theorem \ref{thm:pencil} all but finitely many $D_2$ satisfy $(3)$. As $X_{D_1}(k)$ is Zariski-dense, it follows from \cite[Lem.~3.2]{Demeio} that $\psi'$ has infinitely many fibres with infinitely many rational points, which shows (1). 
As there are infinitely many such $D_2$, we can arrange (4b) using Lemma \ref{lem:ramification}. Next $k(\PP^1)\subset k(D)$ is a biquadratic extension,  hence Galois. To show (2), it suffices to note that the $k(Y_i)$ contain none of the quadratic subfields of $k(D)$ by (4b) and our choice of $\mathcal{P}$. This completes the proof in this case.

If $\pi$ has two non-reduced fibres over $\bar{k}$, then, by our choice of $\psi$ (Proposition~\ref{prop:Chatelet}), the smooth fibres of $\psi$ do not meet meet a non-reduced component of $\pi$, hence as in the proof of Lemma~\ref{lem:ramification} we can move both branch points of the fibres of $\psi$. The proof now proceeds exactly as in the previous case in which every fibre of $\pi$ is reduced.

Finally assume that $\pi$ has one non-reduced fibre over $\bar{k}$, which without loss of generality  lies over $P_1$  (this is necessarily a rational point). If there is at least one smooth fibre of $\psi$ which is unramified over $P_1$, then we can vary the ramification and the proof is similar to the above. So we assume that all smooth fibres of $\psi$ ramify over $P_1$. As in the proof of Proposition \ref{prop:quadratic}, we find that there are infinitely fibres $D_1$ of $\psi$ with $D_1(k) \neq \emptyset$ such that the intersection of the branch locus of $D_1$ and $\mathcal{P}$ is exactly $P_1$, and such that $X_{D_1}:=X \times_{\pi} D_1 \to D_1$ has generic rank at least $r+1$. (Note that $X_{D_1}$ is singular here, so not strictly an elliptic surface in the sense of Definition \ref{def:elliptic_surface}). We fix such a $D_1$ and next  construct $D_2$. 

We again consider the map $\psi':X_{D_1} \to \PP^1$ whose fibres are the curves $D=D_1 \times_{\pi} D_2$ as $D_2$ runs over the fibres of $\psi$. Again $D$ are a family of bisections of the elliptic fibration induced by $\pi$, hence applying Theorem \ref{thm:pencil} to a desingularisation of $X_{D_1}$, we see that all but finitely many $D_2$ satisfy $(3)$. Here $D_1$ and $D_2$ both ramify over $P_1$, and by Lemma \ref{lem:ramification} share no other branch points in general. Now (2) immediately follows, since any quadratic subfield of $k(D)$ ramifies over a point outside of $\mathcal{P}$, so cannot be a subfield of any of the $k(Y_i)$. Moreover Lemma \ref{lem:genus_0} implies (4b) and that $D$ is geometrically integral. To conclude, as $X_{D_1}(k)$ is Zariski-dense and the generic fibre of $\psi'$ has genus $0$, there are infinitely many $D_2$ such that $D(k)$ is Zariski-dense, which shows (1).
\end{proof}

\begin{lemma} \label{lem:genus_0}
	Let $C_1,C_2$ be smooth projective geometrically integral curves of genus $0$
	over a field $K$ of characteristic $0$ equipped with  morphisms
	$f_i: C_i \to \PP^1$ of degree $2$ that have exactly one branch
	point in common. Then $C_1 \times_{\PP^1} C_2$ is  a geometrically integral 
	curve of genus $0$.
\end{lemma}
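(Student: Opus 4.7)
The plan is to work over the algebraic closure $\bar{K}$ (both conclusions being geometric), label the common branch point of $f_1, f_2$ as $P$ and write the branch loci as $B_i = \{P, Q_i\}$ with $Q_1 \neq Q_2$, and then analyse the degree-$4$ biquadratic cover $C_1 \times_{\PP^1} C_2 \to \PP^1$ through its function field.

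For geometric integrality I would argue as follows. Each $f_i$ determines a separable quadratic extension $\bar{K}(C_i)/\bar{K}(\PP^1)$, and over an algebraically closed field such an extension is determined up to isomorphism by its branch locus (equivalently, by the class of its discriminant in $\bar{K}(t)^*/(\bar{K}(t)^*)^2$). Since $B_1 \neq B_2$, the extensions $\bar{K}(C_1)$ and $\bar{K}(C_2)$ are non-isomorphic and hence linearly disjoint over $\bar{K}(\PP^1)$, so
\[
L := \bar{K}(C_1) \otimes_{\bar{K}(\PP^1)} \bar{K}(C_2)
\]
is a field of degree $4$; in particular $C_1 \times_{\PP^1} C_2$ has a single generic point and is geometrically irreducible. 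Reducedness comes for free: $C_1 \times_{\PP^1} C_2$ is an effective Cartier divisor on the smooth surface $C_1 \times_{\bar{K}} C_2$ (hence Cohen--Macaulay), and it is generically reduced because the generic extension of function fields is separable in characteristic $0$; a generically reduced Cohen--Macaulay scheme is reduced by Serre's $R_0 + S_1$ criterion.

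To compute the genus, let $\tilde{D}$ be the normalisation of $C_1 \times_{\PP^1} C_2$, so $\bar{K}(\tilde{D}) = L$, and consider the degree-$2$ subcover $\tilde{D} \to C_1$ coming from $\bar{K}(C_1) \subset L$. After choosing affine coordinates making $P, Q_1, Q_2$ finite, model $C_1$ by $y^2 = (t-P)(t-Q_1)$ and write $L = \bar{K}(C_1)\bigl(\sqrt{(t-P)(t-Q_2)}\bigr)$. On $C_1$ the function $(t-P)(t-Q_2)$ vanishes to order $2$ at the unique preimage of $P$ (since $f_1$ ramifies over $P$, doubling the order of $t-P$) and to order $1$ at each of the two preimages of $Q_2$ (since $f_1$ is \'etale over $Q_2$), and is a unit elsewhere. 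Hence $\tilde{D} \to C_1$ is unramified over $P$ and has exactly two ramification points, both lying over $Q_2$. Since $C_1 \cong \PP^1$, Riemann--Hurwitz gives
\[
2g(\tilde{D}) - 2 \;=\; 2(2g(C_1)-2) + 2 \;=\; -2,
\]
so $g(\tilde{D}) = 0$, which is by definition the geometric genus of $C_1 \times_{\PP^1} C_2$.

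The only point requiring care is verifying that the shared branch point $P$ becomes \emph{unramified} in the cover $\tilde{D} \to C_1$: the ramification already contributed by $f_1$ over $P$ turns $t-P$ into a square in the local ring at its preimage, which kills further ramification in the second double cover. As a sanity check one could instead apply Riemann--Hurwitz to the full Galois cover $\tilde{D} \to \PP^1$ with group $(\ZZ/2)^2$, where the inertia subgroups at $P$, $Q_1$, $Q_2$ are the diagonal, first, and second order-$2$ subgroups respectively; this yields six tame ramification points of index $2$ and the same answer $g(\tilde{D})=0$.
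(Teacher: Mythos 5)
Your proof is correct, and while the first half coincides with the paper's, the genus computation takes a different route. For geometric integrality both arguments rest on the same point: distinct branch loci force $\bar{K}(C_1)$ and $\bar{K}(C_2)$ to be non-isomorphic, hence linearly disjoint, quadratic extensions of $\bar{K}(t)$; you additionally justify reducedness via the $R_0+S_1$ criterion, a step the paper leaves implicit. For the genus, the paper normalises the three branch points to $0,1,\infty$, writes the affine equations $z^2=t$, $w^2=t(t-1)$, eliminates $t$ to get the singular model $w^2=z^2(z^2-1)$, and reads off genus $0$ from its normalisation. You instead apply Riemann--Hurwitz to the intermediate double cover $\tilde{D}\to C_1$, observing that the shared branch point $P$ contributes no ramification because $t-P$ already acquires even valuation on $C_1$. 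Your method is coordinate-free and scales better: the same inertia bookkeeping immediately yields $g=1$ when the branch loci are disjoint, which is exactly the dichotomy used in Proposition \ref{prop:2}(4). One small inaccuracy: $(t-P)(t-Q_2)$ is not ``a unit elsewhere'' on $C_1$ --- it has poles of order $2$ at the two points over $\infty$ (recall $\infty$ is unramified for $f_1$ by your choice of coordinates). Since the valuation there is even, this creates no further ramification and your count of exactly two ramification points stands, but the statement should be phrased in terms of the parity of $\ord$ rather than unit-ness.
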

\begin{proof}
	Our hypotheses imply that $K(C_1) \not \cong K(C_2)$ as quadratic extensions of $K(\PP^1)$,
	hence they are linearly disjoint. This implies that $C:=C_1 \times_{\PP^1} C_2$ 
	is geometrically integral. To calculate the genus, we may pass to the algebraic closure
	and moreover assume that the branch points are $0,1,\infty$. Thus we have
	$$K(C_1) \cong K(t)(\sqrt{t}), \quad K(C_2) \cong K(t)(\sqrt{t(t-1)}).$$
	An affine patch of $C$ therefore has the equations
	$$z^2 = t, \quad w^2 = t(t-1).$$
	Rearranging gives $w^2 = z^2(z^2 - 1)$, which defines a curve of genus $0$.
\end{proof}

\begin{remark}
	It is essential for the proof of Proposition \ref{prop:2} 
	in the case Theorem \ref{thm:non_reduced}.(2) that  $\psi$
	is not the Ch\^{a}telet bundle.
	Otherwise, any two conics $D_1$ and $D_2$ ramify over exactly
	the same points; here $D_1 \times_\pi D_2$
	is never geometrically integral and 
	Proposition \ref{prop:2}.(1) fails.
\end{remark}

\subsection{Proof of Theorem \ref{thm:conic} and Theorem \ref{thm:non_reduced}.(2)}
We choose $D$ as in Proposition \ref{prop:2}, with $\mathcal{P}$ the union of all the branch points of the $Y_i$ and the singular locus of $\pi$. If $g(D) = 0$, then the proof is essentially the same as the proof of Theorem \ref{thm:once}; as $D$ has the Hilbert property there are infinitely many $P \in \PP^1(k)$ in the image of $D(k)$ not in the image of any of the $Y_i(k)$. By Proposition \ref{prop:2} and Theorem \ref{thm:Silverman} applied to a desingularisation of $X \times_{\pi} D \to D$, we find that for all but finitely many choices of $P$ the rank of the fibre jumps twice, as required.

We therefore consider the case $g(D) = 1$. Here the previous argument breaks down, as $D$ does not satisfy the Hilbert property. Nonetheless, we have the following.

\begin{lemma} \label{lem:finite}
	The set $(D \times_{\PP^1} Y_i)(k)$ is finite for all $i$.
\end{lemma}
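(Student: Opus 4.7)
The plan is to reduce to Faltings' theorem. Set $W := D \times_{\PP^1} Y_i$ and let $\tilde{W} \to W$ be its normalization. Since $\tilde{W} \to W$ is a finite birational morphism of curves, $W(k)$ is finite if and only if $\tilde{W}(k)$ is, up to the finitely many singular points of $W$. It therefore suffices to prove that $\tilde{W}$ is a smooth projective geometrically integral $k$-curve of genus at least $2$.

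The first and more delicate step is geometric integrality, equivalently linear disjointness of $\bar{k}(D)$ and $\bar{k}(Y_i)$ over $\bar{k}(\PP^1)$. Since $\bar{k}(D)/\bar{k}(\PP^1)$ is Galois biquadratic, it suffices to show that no non-trivial subfield of $\bar{k}(D)$ over $\bar{k}(\PP^1)$ (i.e.\ $\bar{k}(D)$ itself or one of its three quadratic subfields) embeds into $\bar{k}(Y_i)$ over $\bar{k}(\PP^1)$. Any such subfield corresponds to a cover $C \to \PP^1$ whose branch locus is contained in that of $D \to \PP^1$, and by Proposition \ref{prop:2}(4b) the latter is disjoint from $\mathcal{P}$. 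On the other hand, an embedding $\bar{k}(C) \hookrightarrow \bar{k}(Y_i)$ over $\bar{k}(\PP^1)$ would factor $Y_i \to C \to \PP^1$, forcing the branch locus of $C$ into that of $g : Y_i \to \PP^1$, which lies in $\mathcal{P}$ by our choice. Thus $C \to \PP^1$ would be unramified, contradicting Riemann--Hurwitz.

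For the genus, I would apply Riemann--Hurwitz to the projection $\phi : \tilde{W} \to D$ of degree $n := \deg(Y_i \to \PP^1) \geq 2$. Above each branch point $p$ of $g$, the four preimages in $D$ are étale over $\PP^1$ by (4b), so above each such preimage $\phi$ inherits exactly the ramification of $g$ above $p$; summing gives $\deg R_\phi = 4 \deg R_g = 4(2 g(Y_i) - 2 + 2n)$. Combined with $g(D) = 1$, this yields $g(\tilde{W}) = 4 g(Y_i) + 4n - 3 \geq 5$, and Faltings' theorem concludes.

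The main obstacle is the geometric integrality step: without it, $\tilde{W}$ could a priori split off a component of genus at most $1$ (for instance a component with degree-$1$ projection to $D$, or an étale double cover of $D$), and Faltings would not apply directly to the fibre product. Ruling this out via the branch-locus comparison crucially uses both that $\mathcal{P}$ was chosen to contain the branch loci of all the $Y_i$ and that $D$ is unramified over $\mathcal{P}$; these two inputs are precisely the features of $D$ produced by Proposition \ref{prop:2}.
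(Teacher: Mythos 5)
Your proof is correct and follows essentially the same route as the paper: geometric integrality of the fibre product $D \times_{\PP^1} Y_i$, a Riemann--Hurwitz computation giving genus at least $2$, and then Faltings's theorem. The only difference is that you establish integrality (and handle smoothness via normalization) by hand through the branch-locus comparison, where the paper invokes \cite[Lem.~2.8]{Str} for the same ``no common branch points'' input, and you compute the genus exactly rather than merely bounding it below.
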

\begin{proof}
By construction, the curves $D_1, D_2, Y_i$ are all smooth and share no branch points in common. Therefore $D \times_{\PP^1} Y_i$ is geometrically integral and smooth (see e.g.~\cite[Lem.~2.8]{Str}). But the map $D \times_{\PP^1} Y_i \to D$ is ramified somewhere above one of the ramification points of $Y_i$. As $g(D) = 1$, Riemann--Hurwitz implies that  $g(D \times_{\PP^1} Y_i) \geq 2$. The result now follows from Faltings's theorem \cite{Fal83}.
\end{proof}

The proof of rank jump $2$ now runs in a similar way to the case $g(D) = 0$. Namely, by Proposition \ref{prop:2} and Lemma \ref{lem:finite}, there are infinitely many rational points $P \in \PP^1(k)$  which do not lie in the image of any of the $Y_i(k)$ but do lie in the image of $D(k)$. We then wish to apply Theorem \ref{thm:Silverman} to $X \times_{\pi} D \to D$. But as $g(D) = 1$, to do so we need to know that this surface is not constant.

\begin{lemma}
	The elliptic surface $X \times_{\pi} D \to D$ is non-constant.
\end{lemma}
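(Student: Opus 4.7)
My plan is to argue by contradiction using singular fibres: if $X \times_\pi D \to D$ were constant, then its relative minimal model would be $E \times D$ for some elliptic curve $E/k$, which has no singular fibres. I will show that $X \times_\pi D$ itself has a singular fibre and that this fibre persists in the minimal model.

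The first step is to verify that the total space $X \times_\pi D$ is smooth and that the induced elliptic fibration $X \times_\pi D \to D$ is relatively minimal. The crucial input here is Proposition \ref{prop:2}, condition (4b): the map $\rho: D \to \PP^1$ is unramified over the set $\mathcal{P}$, which was arranged to contain the singular locus of $\pi$. At any point $(x,Q)$ of the fibre product, with $P := \pi(x) = \rho(Q)$, either $P \in \mathcal{P}$ (so $\rho$ is smooth at $Q$) or $P \notin \mathcal{P}$ (so $X_P$ is smooth and hence $\pi$ is smooth at $x$); in both cases the fibre product is smooth at $(x,Q)$. Moreover, above each (unramified) preimage $Q$ of $P$, the fibre $(X \times_\pi D)_Q$ is scheme-theoretically isomorphic to $X_P$; since $\pi$ is relatively minimal, $X_P$ contains no $(-1)$-curve, and this carries over to $X \times_\pi D \to D$.

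The second step produces the singular fibre. Since $X$ is geometrically rational, Noether's formula together with $K_X^2 = 0$ (Remark \ref{rem:anticanonical}) gives $\chi_{\mathrm{top}}(X_{\bar k}) = 12$, so $\pi$ necessarily has at least one singular fibre $X_P$. By the first step, any preimage $Q$ of $P$ in $D$ yields a singular fibre of the relatively minimal elliptic fibration $X \times_\pi D \to D$. This contradicts the fact that the constant surface $E \times D$ has no singular fibres, and the lemma follows.

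The main technical point is establishing the smoothness and relative minimality of $X \times_\pi D$; both would fail at a ramified preimage of a singular fibre of $\pi$, and it is precisely the condition (4b) built into the construction of $D$ in Proposition \ref{prop:2} that excludes this possibility. Once these are in hand, the conclusion is a direct Euler-characteristic-style comparison with the trivial surface $E \times D$.
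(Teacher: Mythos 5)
Your proof is correct and follows essentially the same route as the paper's: both arguments rest on the observation that $X$, being relatively minimal and geometrically rational, has at least one singular fibre, and that condition (4b) forces $D$ to be unramified over the singular locus of $\pi$, so that $X \times_\pi D \to D$ is a smooth, relatively minimal elliptic surface retaining a singular fibre, which a constant surface cannot have. You supply slightly more detail than the paper (the Noether-formula justification for the existence of a singular fibre, where the paper cites Schuett--Shioda, and the pointwise smoothness check on the fibre product), but the underlying argument is identical.
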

\begin{proof}
	The original elliptic surface $X \to \PP^1$ is relatively minimal
	and geometrically rational; in particular it has at least one
	singular fibre \cite[\S8.4]{ShiodaSchuett}. But by construction,
	$D$ ramifies away from singular fibres of $\pi$.
	Thus $X \times_{\pi} D$ is regular, relatively minimal over $D$
	and has at least one singular fibre. Such an elliptic surface
	cannot be constant, as claimed.
\end{proof}

Applying  Theorem \ref{thm:Silverman} completes the proofs of Theorems \ref{thm:conic} and \ref{thm:non_reduced}. \qed

\section{Examples} \label{sec:examples}
\subsection{Rational elliptic surfaces} \label{sec:blow-ups}

 Let $F$ and $G$ be two plane cubic curves over $k$. Assume that $F$ and $G$ meet in at least one $k$-point and that $F$ is smooth. Let $X$ be the minimal desingularisation of the surface
\begin{equation} \label{eqn:pencil}
tF(x,y,z)+uG(x,y,z)=0 \quad \subset \PP^1 \times \PP^2.
\end{equation}
Then $X$ is a rational elliptic surface, with a common point $P$ of $F$ and $G$ giving a section over $k$. The projection onto $\PP^2$ realises $X$ as a blow-up of $\PP^2$ in the $9$ (possibly infinitely near) points over $\bar{k}$ which lie above the base locus $F \cap G$. The pencil of lines through $P$ gives rise to a conic bundle on $X$. Theorem \ref{thm:conic} thus applies.

\begin{corollary} \label{cor:rational}
	Let $r$ denote the generic rank of the elliptic fibration
	\eqref{eqn:pencil}.
	Then the set of curves with rank at least $r+2$ is not thin.
\end{corollary}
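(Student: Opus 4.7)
The plan is to apply Theorem~\ref{thm:conic} to the fibration $\pi:X\to\PP^1$ arising from~\eqref{eqn:pencil}. I need to verify its three hypotheses. That $X$ is a geometrically rational elliptic surface over $k$ is immediate from the construction: $X$ is the blow-up of $\PP^2$ at the (possibly infinitely near) base points $F\cap G$, and $\pi$ is induced by the pencil projection, with a $k$-rational section coming from the exceptional divisor above the common point $P$.

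For the bisection of arithmetic genus~$0$, I take the strict transform of a general line $\ell\subset\PP^2$ through $P$. Since $F$ is smooth, B\'ezout gives $\ell\cdot F=3$ on $\PP^2$, with one intersection at $P$. A general $\ell$ through $P$ passes through no other base point of the pencil, so its strict transform $\tilde\ell\subset X$ is isomorphic to $\PP^1$ and, by the standard blow-up formula, satisfies $\tilde\ell\cdot X_t=3-1=2$ on $X$. Hence $\tilde\ell$ is a bisection of $\pi$ with $p_a(\tilde\ell)=0$, which in particular induces a conic bundle on $X$ via Lemma~\ref{lem:conic_bundle}.

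The third hypothesis is that the generic fibre of $\pi$ is not a quadratic twist of a constant elliptic curve, which by Proposition~\ref{prop:twononred} amounts to excluding the singular fibre configuration $2I_0^*$ over~$\bar k$. I expect this step to be the main obstacle. The observation is that an $I_0^*$ fibre in a cubic pencil arises from a non-reduced cubic member of the pencil, necessarily of the form $L^2\cdot M$ (a double line plus a line). Two such members in the pencil would force the nine base points $F\cap G$ to lie on the union of four fixed lines with prescribed multiplicities, which is incompatible with $F$ being a smooth plane cubic passing through all of them. Once this last exclusion is made precise, Theorem~\ref{thm:conic} applies directly and yields that $\{t\in\PP^1(k):\rank X_t(k)\geq r+2\}$ is not thin.
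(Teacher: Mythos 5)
Your first two steps coincide with the paper's argument: the paper likewise takes the blow-up description of $X$ and uses the pencil of lines through $P$ (equivalently, the strict transform of a general such line, which is a bisection of arithmetic genus $0$) to produce the conic bundle, and then invokes Theorem~\ref{thm:conic}. The gap is in your third step, and it is not merely a missing detail: the exclusion you hope to make precise is false. Every relatively minimal rational elliptic surface with a section --- including the quadratic twist surfaces of Proposition~\ref{prop:twononred} --- is the blow-up of $\PP^2$ along the base scheme of a cubic pencil whose generic member is smooth, so requiring $F$ smooth does not rule out the configuration $2I_0^*$. Concretely, consider the pencil
$$\lambda\, w^2 z + \mu\, u(u-z)(u+z) = 0 \qquad \subset \PP^2_{[u:w:z]},$$
whose generic member is a smooth cubic and whose base locus is supported on the rational points $[0:1:0]$ (multiplicity $3$) and $[0:0:1]$, $[\pm 1:0:1]$ (multiplicity $2$ each). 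Writing $t=-\mu/\lambda$ and substituting $X=tu$, $Y=tw$ in the affine chart $z=1$ transforms $w^2=t(u^3-u)$ into $Y^2=X^3-t^2X$, i.e.\ the quadratic twist family $t\eta^2=\xi^3-\xi$, which has singular fibre configuration $2I_0^*$ and whose generic fibre is a quadratic twist of the constant curve $Y^2=X^3-X$. Note moreover that in this pencil only one bad member ($w^2z=0$) is non-reduced; the other $I_0^*$ fibre arises from the \emph{reduced} member consisting of three concurrent lines, the common point being a base point of multiplicity $3$. So both ingredients of your claimed exclusion fail: an $I_0^*$ fibre does not force a member of the form $L^2M$, and two $I_0^*$ fibres are compatible with $F$ smooth.

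Consequently the quadratic twist case genuinely occurs within the family \eqref{eqn:pencil} and has to be handled rather than excluded. (The paper's own text is terse here --- it simply asserts that Theorem~\ref{thm:conic} applies --- so a complete argument must either show that a presentation of a $2I_0^*$ surface in the form \eqref{eqn:pencil} over $k$, with $F$ smooth and a $k$-rational base point, forces the hypotheses of Theorem~\ref{thm:non_reduced}(2), namely that $f$ and $g$ acquire roots over $k$, or else add the hypothesis that the pencil is not a quadratic twist family, as the paper does explicitly in its treatment of the families \eqref{eqn:KM}.) As written, your step 3 rests on a false assertion, so the proof does not go through.
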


In the stated level of generality this result is new and improves on the results on rational elliptic surfaces from \cite{Billard, salgado1}. 

\begin{example} \hfill
	\begin{enumerate}
		\item Corollary \ref{cor:rational} applies to the 
		Mordell family $y^2 = x^3 + t$ (here $r = 0$).
		\item If the intersection of  $F$ and $G$ 
		consists of nine rational points in general position 
		then the elliptic fibration on $X$ has generic Mordell-Weil rank $8$.
		Corollary~\ref{cor:rational} implies that the collection
		of curves with rank at least $10$ is not thin.
		This construction generalises some of the constructions of elliptic
		curves of large rank given in \cite[\S11.2]{Ser97}.
	\end{enumerate}
\end{example}

\subsection{Some quadratic families of elliptic curves} \label{sec:DP1}
Consider families of elliptic curves of the form
\begin{equation} \label{eqn:KM}
y^2=a_3(t)x^3+a_2(t)x^2+a_1(t)x+a_0(t)
\end{equation}
where $\deg a_i(t) \leq 2$ and $a_3(t) \neq 0$. We assume that the family is non-constant and let $r$ be the generic rank. Using the  conic bundle structure given by projecting to $x$, in \cite[Cor.~2]{KM} it was shown that there are infinitely many $t \in \PP^1(k)$ for which the rank is positive. An application of Theorem \ref{thm:once} gives the following immediate improvement.

\begin{corollary}
	The set of $t \in k$ for which \eqref{eqn:KM} has rank at least $r+1$
	is not thin.
\end{corollary}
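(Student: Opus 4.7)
The plan is to apply Theorem \ref{thm:once} directly to the smooth projective model $X$ of \eqref{eqn:KM}, equipped with the fibration $\pi:X\to \PP^1$ given by the $t$-coordinate. I need to verify two things: that $X$ is a geometrically rational elliptic surface, and that $\pi$ admits a bisection of arithmetic genus $0$.

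For the first point, I would clear denominators by substituting $Y = a_3(t)y$ and $X' = a_3(t)x$ into \eqref{eqn:KM}, producing the Weierstrass equation
\[
  Y^2 = X'^3 + a_2(t) X'^2 + a_1(t)a_3(t)\, X' + a_0(t)a_3(t)^2,
\]
whose coefficients have degree at most $2$, $4$, and $6$, respectively. After absorbing the $X'^2$ term and passing to a global minimal model, the degree bounds $\deg A \leq 4$, $\deg B \leq 6$ place the relatively minimal elliptic surface in the rational regime of \cite[\S 8.4]{ShiodaSchuett}; the assumption that the family is non-constant rules out the degenerate case where the minimal model has trivial fundamental line bundle.

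For the bisection, I would use exactly the conic bundle employed by Koll\'ar and Mella, namely projection to the $x$-coordinate. For $x_0 \in k$ the slice $\{x = x_0\}$ in the affine model is cut out by $y^2 = q_{x_0}(t)$, where
\[
  q_{x_0}(t) := a_3(t)x_0^3 + a_2(t)x_0^2 + a_1(t)x_0 + a_0(t)
\]
has degree at most $2$ in $t$. For all $x_0$ outside a proper closed subset, $q_{x_0}$ has nonzero leading coefficient and separable roots, so the projective closure of the slice in $X$ is a smooth (hence geometrically integral) conic of arithmetic genus $0$. Composing with $\pi$ is the degree $2$ cover $t \mapsto t$ on this curve, exhibiting it as a bisection of the elliptic fibration.

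The only mildly delicate point is ensuring that the closure of a generic slice in the resolution $X$ of the Weierstrass model picks up no extra components or singularities: this follows because for generic $x_0$ the affine conic is smooth and disjoint from the singular locus of the Weierstrass form. With both hypotheses of Theorem \ref{thm:once} in hand, the corollary is immediate: the set of $t \in k$ with $\rank X_t(k) \geq r+1$ is not thin.
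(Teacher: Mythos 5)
Your proposal is correct and follows exactly the paper's route: the paper likewise obtains the bisection from the conic bundle given by projection to the $x$-coordinate and then invokes Theorem \ref{thm:once} directly. The details you supply (the Weierstrass degree bounds giving geometric rationality, and the genericity of the smooth conic slice $x=x_0$) are precisely the verifications the paper leaves implicit.
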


The quadratic twist family from Theorem \ref{thm:non_reduced} is a subfamily here. Once we remove this family, we obtain the following from Theorem \ref{thm:conic}.

\begin{corollary} 
	Assume that there is no polynomial $g(t)$ such that 
	$a_i(t)/g(t) \in k$ for all $i$.
	Then the set of $t \in k$ for which \eqref{eqn:KM} has rank at least $r+2$
	is not thin.
\end{corollary}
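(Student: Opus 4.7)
The plan is to apply Theorem \ref{thm:conic}. The preceding corollary already shows that \eqref{eqn:KM} defines a geometrically rational elliptic surface $X$ equipped with a conic bundle (the projection onto $x$), whose generic fibre is a bisection of $\pi$ of arithmetic genus $0$ (the conic $y^2 = \sum_i a_i(t) x_0^i$ in $(y,t)$, for generic $x_0 \in k$). Hence the only hypothesis of Theorem \ref{thm:conic} left to verify is that the generic fibre of $\pi$ is not a quadratic twist of a constant elliptic curve.

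I argue by contrapositive. Suppose the generic fibre is a quadratic twist of a constant curve. By Proposition \ref{prop:twononred}, $X$ admits a Weierstrass model $g(t) y^2 = f(x)$ with $\deg f = 3$ and $\deg g \leq 2$, both separable. Setting $Y = g(t) y$ converts this into $Y^2 = g(t) f(x)$, i.e., a presentation of $X$ in the shape \eqref{eqn:KM} with coefficients $\widetilde{a}_i(t) = f_i g(t)$ (writing $f(x) = \sum_i f_i x^i$), all proportional to $g(t)$. The key remaining step is to transfer this proportionality from this particular presentation to the \emph{original} coefficients $a_i(t)$ of \eqref{eqn:KM}. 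Two such presentations of $X$ differ by a change of variables preserving both the shape $y^2 = \sum_i a_i(t) x^i$ and the degree bound $\deg a_i \leq 2$; the degree bound is restrictive enough to force such transformations to act on $(y,x)$ by $x \mapsto \lambda x + \mu$, $y \mapsto \nu y$ with $\lambda, \nu \in k^*$ and $\mu \in k$, since any $t$-dependent rescaling of $x$ or $y$ (or a $t$-dependent translation in $x$) would either force new coefficients of degree exceeding $2$ or introduce poles in $t$. Such transformations act on $(a_0, \ldots, a_3)$ by $k$-linear operations preserving proportionality to a common polynomial, so the original $a_i$'s also satisfy $a_i(t) = c_i g(t)$ for some $c_i \in k$, contradicting the hypothesis. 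Theorem \ref{thm:conic} then yields the conclusion.

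The main obstacle is making the enumeration of admissible transformations fully rigorous. An alternative, more computational route proceeds via the discriminant: a Weierstrass model for $X$ derived from \eqref{eqn:KM} has discriminant proportional to $a_3(t)^2 \Delta_x(t)$, where $\Delta_x$ is the discriminant of $\sum_i a_i(t) x^i$ viewed as a cubic in $x$. The $2I_0^*$ configuration of Proposition \ref{prop:twononred}.(3) forces the minimal Weierstrass discriminant to equal a nonzero constant times $g_0(t)^6$ for some separable quadratic $g_0$, from which the required proportionality $a_i(t) = c_i g_0(t)$ can be extracted by a direct algebraic analysis of the equation in the $a_i$'s.
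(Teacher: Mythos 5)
Your overall strategy is the same as the paper's: reduce to Theorem \ref{thm:conic} by arguing that the non-proportionality hypothesis excludes quadratic twists of constant curves (the paper treats this reduction as immediate). You are also right that the only substantive point is the implication ``generic fibre a quadratic twist of a constant curve $\Rightarrow$ all $a_i$ proportional to a common polynomial'', and right to flag the transfer of proportionality between presentations as the delicate step. Unfortunately that implication is false, and your claimed classification of admissible changes of variables is exactly where it breaks. Consider $y^2 = x^3 + t^2x$, i.e.\ $(a_3,a_2,a_1,a_0)=(1,0,t^2,0)$. There is no $g$ with $a_i/g\in k$ for all $i$ (the coefficient $1$ forces $g$ constant, and then $t^2/g\notin k$), so the corollary's hypothesis holds; yet the substitution $(x,y)\mapsto(tx,t^2y)$ turns the equation into $ty^2=x^3+x$, so the generic fibre \emph{is} the quadratic twist by $t$ of the constant curve $y^2=x^3+x$ (equivalently, $\Delta=-64t^6$ and the minimal model has configuration $2I_0^*$, cf.\ Proposition \ref{prop:twononred}). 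The $t$-dependent rescaling is admissible here precisely because $a_2=a_0=0$: the monomials whose degrees would exceed $2$ are absent, so the rigidity you invoke fails. The same example defeats your alternative discriminant route, since the $2I_0^*$ condition does not force proportionality of the $a_i$.

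The consequence is that Theorem \ref{thm:conic} alone cannot prove the corollary: there are families satisfying its hypothesis whose generic fibre is a quadratic twist of a constant curve. (The paper's own one-line justification --- ``once we remove this family'' --- makes the same identification without proof, so the gap is present in the source as well.) To repair the argument one must handle these exceptional presentations separately. In the example above, and plausibly in general, the degree constraints force the two non-reduced fibres to lie over $k$-rational points of $\PP^1$ and force the cubic to have a $k$-rational root (here $t=0,\infty$ and $x=0$), so that Theorem \ref{thm:non_reduced}.(2) applies and still gives rank jump $2$ on a non-thin set. A complete proof therefore requires: (i) a correct classification of the presentations of a $2I_0^*$ surface in the shape \eqref{eqn:KM}, allowing the $t$-dependent substitutions your enumeration omits; and (ii) a verification that every non-proportional such presentation falls under Theorem \ref{thm:non_reduced}.(2). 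Neither step appears in your proposal.
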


\subsection{Del Pezzo surfaces}
Let $X$ be a del Pezzo surface of degree $1$ with a conic bundle $\psi: X \to \PP^1$. Let $\widetilde{X}$ be the blow-up of $X$ in the base-point of the anticanonical linear system and $\pi: \widetilde{X} \to \PP^1$ the induced elliptic fibration. Every fibre of $\pi$ is geometrically integral, hence $\pi$ has generic rank $\rho(X) - 2$, where $\rho(X)$ is the Picard number of $X$. Theorem \ref{thm:conic} thus gives the 
following.

\begin{corollary} \label{cor:DP1}
	The set $\{ t \in \PP^1(k) : \rank \widetilde{X}_t(k) \geq \rho(X)\}$
	is not thin.
\end{corollary}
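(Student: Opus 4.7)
The plan is to verify the three hypotheses of Theorem \ref{thm:conic} for the elliptic fibration $\pi: \widetilde X \to \PP^1$ and then invoke it directly. Geometric rationality is immediate, since $X$ is a del Pezzo surface (hence rational over $\bar k$) and $\widetilde X$ is a single blow-up. The hypothesis that the generic fibre of $\pi$ is not a quadratic twist of a constant elliptic curve also comes for free: by Proposition \ref{prop:twononred}, such a generic fibre would force $\pi$ to have a non-reduced fibre over $\bar k$, contradicting the standing assumption that every fibre of $\pi$ is geometrically integral.

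The only geometric work is to exhibit a bisection of arithmetic genus $0$. The base-point $b$ of $|-K_X|$ is a single $k$-rational point: the base locus of the pencil is a $k$-subscheme of length $(-K_X)^2 = 1$, which must be $\Spec k$. The conic bundle $\psi$ has smooth geometrically integral generic fibre, and $b$ lies on at most one fibre of $\psi$. Hence for all but finitely many $t \in \PP^1(k)$, the conic $C := \psi^{-1}(t)$ is geometrically integral, avoids $b$, and satisfies $C \cdot (-K_X) = 2$ by adjunction (as $C^2 = 0$ and $p_a(C) = 0$). Writing $\beta: \widetilde X \to X$ for the blow-up with exceptional divisor $E$, the strict transform $\widetilde C$ of $C$ coincides with its total transform $\beta^* C$, and
$$\widetilde C \cdot (-K_{\widetilde X}) \;=\; \beta^* C \cdot (-\beta^* K_X - E) \;=\; C \cdot (-K_X) \;=\; 2.$$
Since $-K_{\widetilde X}$ defines the fibration $\pi$ (Remark \ref{rem:anticanonical}), $\widetilde C$ is the required bisection of arithmetic genus $0$.

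With all three hypotheses of Theorem \ref{thm:conic} verified, it applies to $\pi$ with generic rank $r = \rho(X) - 2$ and yields a non-thin set of $t \in \PP^1(k)$ with $\rank \widetilde X_t(k) \geq r + 2 = \rho(X)$, which is exactly the statement of the corollary. I do not anticipate a serious obstacle: once Theorem \ref{thm:conic} is available, the argument reduces to the hypothesis check above, and the delicate parts (biquadratic extensions and the use of Faltings's theorem on auxiliary covers) have already been absorbed into the proof of Theorem \ref{thm:conic}.
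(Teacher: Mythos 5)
Your proof is correct and follows the same route as the paper, which likewise deduces the corollary directly from Theorem \ref{thm:conic}; you simply make explicit the hypothesis checks (geometric rationality, the pulled-back conic fibre avoiding the anticanonical base point as the genus-$0$ bisection, and the exclusion of the quadratic-twist case via Proposition \ref{prop:twononred} and the integrality of all fibres) that the paper leaves implicit. No gaps.
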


If $\psi$ is relatively minimal, then $X$ is  non-rational by \cite[Cor.~1.7]{Isk70}.

\end{document}